\DeclareMathOperator{\Ad}{Ad}
\DeclareMathOperator{\Int}{Int}
\DeclareMathOperator{\Gal}{Gal} 
\DeclareMathOperator{\ab}{ab}
\DeclareMathOperator{\sgn}{sgn}
\DeclareMathOperator{\res}{res}
\DeclareMathOperator{\inv}{inv} 
\DeclareMathOperator{\new}{new} 
\DeclareMathOperator{\Norm}{Norm}
\DeclareMathOperator{\obs}{obs} 
\DeclareMathOperator{\Tam}{Tam}
\numberwithin{equation}{subsection}
\newtheorem{theorem}{Theorem}[subsection]
\newtheorem{lemma}[theorem]{Lemma}
\newtheorem{proposition}[theorem]{Proposition}
\theoremstyle{definition}
\newtheorem{remark}[theorem]{Remark}
\begin{document}
 
\title[Splitting invariants and sign conventions]
{On splitting invariants and sign conventions in endoscopic transfer}

\author[R. Kottwitz]{R. Kottwitz}
\address{R. Kottwitz\\Department of Mathematics\\ University of
Chicago\\ 5734 University Avenue\\ Chicago, Illinois 60637}
\email{kottwitz@math.uchicago.edu}

\author[D. Shelstad]{D. Shelstad}
\address{D. Shelstad\\Mathematics Department\\ Rutgers University\\ Newark,
NJ 07102}
\email{shelstad@rutgers.edu}

\subjclass[2010]{Primary 11F72; Secondary 22E50}

\begin{abstract} 
The transfer factors for standard endoscopy involve, among other things, the
Langlands-Shelstad splitting invariant. This note introduces a twisted
version of that splitting invariant. The twisted splitting invariant is
then   used to define a better twisted factor 
$\Delta_I$. In addition we correct a sign error in the definition of twisted
transfers. There are two ways to correct the sign error. One way yields
twisted transfer factors $\Delta'$ that are compatible with the classical
Langlands correspondence. The other way yields twisted transfer factors
$\Delta_D$ that are compatible with a renormalized version of the Langlands
correspondence. 
\end{abstract}

\maketitle

\section{Introduction}

Waldspurger has observed that in order to have smooth matching \cite{W1} 
of functions for twisted endoscopy, the definition of twisted transfer 
factors given in \cite{KS} must be modified if the attached restricted root
system is non-reduced. This happens only when the
root system itself has an irreducible component of type $A_{2n}$ such that 
\begin{enumerate}
\item some power $\theta^i$ of $\theta$ preserves that irreducible
component, and 
\item $\theta^i$ acts nontrivially on the Dynkin diagram of that component. 
\end{enumerate} 
The modification is of course needed when $\theta$ is, up to inner
automorphism, transpose-inverse on $GL(2n+1)$. On the other hand, it never
arises in the context of cyclic base change. 

 Waldspurger  has made a specific proposal \cite{W2} for modifying the
definition of twisted transfer factors and has shown that his modified
factors do yield smooth matching for twisted endoscopy over $p$-adic fields.
Waldspurger proposes to modify the term $\Delta_{II}$ by replacing the
expression  (4.3.4) in \cite{KS}, namely 
\begin{equation}\label{eq.deltaKS}
\chi_{\alpha_{\res}}(N\alpha(\delta^*)+1),
\end{equation}
 by the slightly different expression  
\begin{equation}\label{eq.deltaW}
\chi_{\alpha_{\res}}((N\alpha(\delta^*)+1)/2).
\end{equation} 
When $2$ is a nonzero square in the ground field $k$ (for example, when 
$k$ is $\mathbb R$ or $\mathbb C$) the terms \eqref{eq.deltaKS} and
\eqref{eq.deltaW} are equal, but in general they can certainly be
different.  

It might be thought that nothing further need be said, since
Waldspurger's modification yields a satisfactory theory for all local fields
of characteristic
$0$. The trouble is that \eqref{eq.deltaW} is undefined for local fields of
characteristic $2$. For such local fields it seems unlikely that there is
any way to fix the definition of twisted transfer factors by modifying
$\Delta_{II}$. However, twisted transfer factors are the product of four
terms, which leaves
open the possibility of modifying one of the three other terms in such a way
as to obtain the same overall result, i.e., to multiply the twisted
transfer factor of \cite{KS} by the sign 
\[
\prod_\beta \sgn_{F_\beta/F_{\pm\beta}}(2),
\]
where the product is taken over a set of representatives $\beta$ for the
symmetric
$\Gamma$-orbits in the set of restricted roots
$\beta$ that are of type
$R_3$ and come from $H$.  

In this note we will show how this can be done. We will leave $\Delta_{II}$
unchanged and instead  modify $\Delta_{I}$. This approach seems quite
natural.  
The term $\Delta_I$ of \cite{KS} was defined in terms of the
Langlands-Shelstad splitting invariant $\lambda(T^x) \in H^1(k,T^x)$. The
torus $T^{x}$ is defined  after passage to the simply-connected cover
of the derived group of $G.$ It is harmless to assume that $G$ itself is
semisimple and simply-connected. Then $T^{x}=T^{\theta }$ and so we have 
$ \lambda (T^{\theta })\in H^{1}(k,T^{\theta }).$ We will
define a new version
$\Delta_I^{\new}$ of
$\Delta_I$ by introducing a \emph{twisted} splitting invariant
$\lambda(T,\theta) \in H^1(k,T^\theta)$. The basic idea is quite simple:
$\lambda(T,\theta)$ is just a \emph{refinement} of the untwisted
splitting invariant $\lambda(T) \in H^1(k,T)$ of \cite{LS} obtained from
$\theta$-invariant $a$-data, in the sense that the image of
$\lambda(T,\theta)$ under
$H^1(k,T^\theta) \to H^1(k,T)$ is equal to the Langlands-Shelstad splitting
invariant $\lambda(T)$. 

Waldspurger also found a sign error in the definition of twisted transfer
factors. Contrary to what is stated in \cite{KS}, the factor
$\Delta=\Delta_I\Delta_{II}\Delta_{III}\Delta_{IV}$ proposed there 
is not independent of the choice of $\chi$-data, because changing the
choice of $\chi$-data multiplies $\Delta_{II}$ and $\Delta_{III}$ by the
same factor. In order that twisted transfer factors be independent of
$\chi$-data, either
$\Delta_{II}$ or $\Delta_{III}$ needs to occur with exponent $-1$. Since
$\Delta_I$ and $\Delta_{III}$ are linked together for other reasons, one
cannot invert $\Delta_{III}$ without inverting $\Delta_I$ at the same time. 
These considerations suggest two possible corrected versions $\Delta_D$ and
$\Delta'$ of twisted transfer factors, namely 
\begin{align}
\Delta_D:=&\Delta_I^{\new}\Delta_{II}^{-1}\Delta_{III}\Delta_{IV}, \\
\Delta':=&(\Delta_I^{\new}\Delta_{III})^{-1}\Delta_{II}\Delta_{IV}.
\end{align} 
The factors $\Delta'$ are compatible with the classical Langlands
correspondence. The factors $\Delta_D$ are compatible with the renormalized
Langlands correspondence discussed in section \ref{sec.ren}. 

This note is organized as follows. In section \ref{sec.2} we define the 
twisted  splitting invariant. In section  \ref{sec.3} we define and study the
improved version
$\Delta_{I}^{\new}$ of $\Delta_I$. In section \ref{sec.ren} we discuss the
renormalized version of the local Langlands correspondence. In section
\ref{sec.55} we define corrected versions $\Delta_D$ and $\Delta'$ of
twisted transfer factors. Subsection \ref{sub.Wh} treats the
Whittaker normalization of twisted transfer factors. Subsection
\ref{sub.cbc} relates the twisted transfer factors $\Delta'$ for cyclic base
change  to the transfer factors $\Delta'$ for standard endoscopy; this
corrects the slightly flawed treatment given in \cite{App}.  

It remains to thank Waldspurger for pointing out the described errors in
\cite{KS} and for observing that the factor $\Delta'$ works.  

\section{Definition of the twisted splitting invariant}\label{sec.2}
\subsection{Review of the Langlands-Shelstad splitting invariant} Since our
twisted splitting invariant will be  a refinement of the one in
\cite{LS}, our first task is to review the relevant constructions from that
article, whose notation we adopt almost without change. 

We work over an arbitrary ground field $k$. We do not assume that $k$ has
characteristic $0$. We choose a separable closure $\bar k$ of $k$ and put
$\Gamma=\Gal(\bar k/k)$. We consider a connected reductive group $G$ over
$k$. We assume that $G$ is quasi-split over $k$. It is convenient, and
harmless for our purposes, to assume further that $G$ is semisimple and
simply-connected. We fix a $k$-splitting $(\mathbf B,\mathbf
T,\{X_\alpha\})$ of $G$. We denote by $\sigma_{\mathbf T}$ the action of
$\sigma \in \Gamma$ on
$\mathbf T$ and   set
$\mathbf{\Gamma}=\{\sigma_{\mathbf T}:\sigma \in \Gamma\}$. We write $\mathbf
\Omega$ for the Weyl group $\Omega(G,\mathbf T)$. 

For each simple root $\alpha$ of $\mathbf T$ we denote by $M_\alpha$ the
Levi subgroup of $G$ containing $\mathbf T$ and having root system
$\{\pm\alpha\}$; the group $M_\alpha$ and its derived group $G_\alpha$ are
defined over $\bar k$. Now $G_\alpha$ is isomorphic to $SL_2$ by virtue of
our assumption that $G$ is semisimple and simply-connected. In fact there
exists a unique $\bar k$-isomorphism $\xi_\alpha:SL_2 \to G_\alpha$ such
that 
\begin{enumerate}
\item $\xi_\alpha$ maps the diagonal subgroup of $SL_2$ isomorphically to the
maximal torus $\mathbf T \cap G_\alpha$ of $G_\alpha$,  
\item $\xi_\alpha$ maps the upper triangular Borel subgroup of $SL_2$
isomorphically to the Borel subgroup $\mathbf B \cap G_\alpha$ of $G_\alpha$,
\item $\xi_\alpha$ maps 
$\begin{bmatrix}
0 & 1 \\
0 & 0
\end{bmatrix}$ to the root vector $X_\alpha$ occurring in our chosen
$k$-splitting.
\end{enumerate} 
The element $n(\alpha) \in \Norm(\mathbf T,G)(\bar k)$ obtained as the image
under $\xi_\alpha$ of 
$
\begin{bmatrix}
0 & 1 \\
-1 &0
\end{bmatrix}
$
lifts the simple reflection $\omega(\alpha) \in \mathbf\Omega$. 
For any $\omega \in \mathbf\Omega$ one obtains a lifting $n(\omega)\in
\Norm(\mathbf T,G)(\bar k)$ of $\omega$ by putting 
\[
n(\omega)=n(\alpha_1)\cdots n(\alpha_r)
\] 
for any reduced expression $\omega=\omega(\alpha_1)\cdots\omega(\alpha_r)$. 

With this notation in place we are ready to review the construction in
section 2.3 of \cite{LS}. We consider a maximal $k$-torus $T$ in $G$. We fix
$a$-data $\{a_\alpha\}_{\alpha \in R(G,T)}$ for the root system $R(G,T)$ of
$T$. Thus the elements $a_\alpha \in \bar k$ satisfy the conditions 
\begin{enumerate}
\item $a_{\sigma\alpha}=\sigma(a_\alpha)$ for all $\sigma \in \Gamma$, and
\item $a_{-\alpha}=-a_{\alpha}$. 
\end{enumerate} 

In order to define the splitting invariant
$\lambda_{\{a_\alpha\}}(T) \in H^1(k,T)$, we begin by
choosing a Borel subgroup $B$ of $G$ (over $\bar k$) that contains $T$, as
well as an element $h \in G(\bar k)$ such that $(B,T)^h=(\mathbf B,\mathbf
T)$. Denote by $\sigma_T$ both the action of $\sigma\in\Gamma$ on $T$ and its
transport to
$\mathbf T$ by $\Int(h^{-1})$. We then have 
\begin{equation}
\sigma_T=\omega_T(\sigma)\rtimes \sigma_{\mathbf T} \in \mathbf \Omega 
\rtimes \mathbf \Gamma
\end{equation}
where $\omega_T(\sigma)$ is the image in $\mathbf\Omega$ of the element
$h^{-1}\sigma(h) \in \Norm(\mathbf T,G)(\bar k)$. We use $\Int(h^{-1})$ to
transport our $a$-data from $T$ to $\mathbf T$. 

Now $\mathbf\Omega\rtimes\mathbf\Gamma$ is a group of automorphisms of
$R(G,\mathbf T)$. For any automorphism $\zeta$ of $R(G,\mathbf T)$ we
consider the element $x(\zeta) \in \mathbf T(\bar k)$ defined by 
\begin{equation}\label{eq.zeta}
x(\zeta)=\prod_{\alpha \in R(\zeta)} a_\alpha^{\alpha^\vee},
\end{equation}
where $R(\zeta):=\{\alpha \in R(G,\mathbf T) : \alpha >0,\zeta^{-1}\alpha
<0\}$. 
Here $\alpha > 0$ means that $\alpha$ is a root of $\mathbf T$ in $\mathbf
B$.  In \cite{LS} it is shown that 
\begin{equation}\label{eq.m}
m(\sigma):=x(\sigma_T)n(\omega_T(\sigma))
\end{equation}
is a $1$-cocycle of $\Gamma$ in $\Norm(\mathbf T,G)$ and that 
\begin{equation}\label{eq.t}
t(\sigma):=hm(\sigma)\sigma(h^{-1})
\end{equation} 
is a $1$-cocycle of $\Gamma$ in $T$. The class in $H^1(k,T)$ of the 
$1$-cocycle $t(\sigma)$ is by definition the splitting invariant
$\lambda_{\{a_\alpha\}}(T)$; it is independent of the choice of $h$.  It
depends on the chosen
$k$-splitting of
$G$, even though this dependence is not reflected in the notation. 

However, $\lambda_{\{a_\alpha\}}(T)$ does not depend on the choice of Borel
subgroup $B$ containing $T$ \cite[2.3.3]{LS}. We need to recall why this is
so.  Suppose that $B$ is replaced by $B'=vBv^{-1}$ with $v \in \Norm(
T,G)$. Set $u=h^{-1}vh \in \Norm (\mathbf T,G)$ and let $\mu$ be the image
of $u$ in $\mathbf\Omega$. We may as well choose $v$ so that $u=n(\mu)$.
Obviously the element $h'=vh$ satisfies $(B',T)^{h'}=(\mathbf B,\mathbf T)$. 
Using $B'$, $h'$ in place of $B$, $h$, we obtain a $1$-cocycle $t'(\sigma)$ 
of  $\Gamma$ in $T$, and in \cite{LS} it is shown that $t'(\sigma)$ is
cohomologous to $t(\sigma)$. In fact, from the proofs of Lemmas 2.3.A and
2.3.B of \cite{LS} it is clear, with our choice of $v$, that $t'(\sigma)$ is
the product of
$t(\sigma)$ and the coboundary of the element 
\begin{equation}\label{eq.cob}
h x(\mu)h^{-1} \in T(\bar k). 
\end{equation} 
This completes our review of  the untwisted splitting
invariant. 

\subsection{Definition of twisted splitting invariants} 
We retain all the previous notation and assumptions, but now we further
consider a $k$-automorphism $\theta$ of $G$ that preserves the $k$-splitting
$(\mathbf B,\mathbf T,\{X_\alpha\})$. We need to understand how $\theta$
interacts with the constructions made in the untwisted case. 

By
assumption $\theta$ preserves our chosen $k$-splitting of $G$. It follows
easily that 
\begin{equation}\label{eq.ntheq}
n(\theta(\omega))=\theta(n(\omega))
\end{equation}
for all $\omega \in \mathbf\Omega$. 

In this twisted situation we are  interested exclusively in
\emph{$\theta$-admissible} maximal
$k$-tori $T$, by which we mean that $\theta(T)=T$ and that there exists a
Borel subgroup $B$ (over $\bar k$) containing $T$ and satisfying
$\theta(B)=B$. For such $T$ the automorphism $\theta$ acts on
$R(T,G)$, and we are only interested in \emph{twisted $a$-data} for $T$, by
which we mean $a$-data for the root system $R(T,G)$ that satisfy the
additional condition 
\begin{equation}
a_{\theta(\alpha)}=a_\alpha
\end{equation}
for all $\alpha \in R(T,G)$. In other words, twisted $a$-data is nothing but
standard $a$-data that happens to be invariant under the obvious action of
$\theta$ on the set of all standard $a$-data. 

The first step in defining the untwisted splitting invariant was to
choose $B$ and $h$. In the twisted situation we begin by choosing a Borel
subgroup $B$ containing $T$ such that $\theta(B)=B$. 

\begin{lemma}
There exists $h \in G^\theta(\bar k)$ such that $(B,T)^h=(\mathbf B,\mathbf
T)$. 
\end{lemma}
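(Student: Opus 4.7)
The plan is to reduce the lemma to the transitivity of $G^\theta(\bar k)$ on the set of $\theta$-stable Borel pairs of $G$. First, observe that $\theta$ has finite order: since it preserves the pinning, its order equals the order of its image in the automorphism group of the Dynkin diagram. Combined with the standing assumption that $G$ is semisimple and simply-connected, Steinberg's theorem on fixed-point subgroups guarantees that $G^\theta$ is a connected reductive group.

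Next, I would invoke the standard $G^\theta$-equivariant bijection between $\theta$-stable Borel pairs $(B',T')$ of $G$ and Borel pairs of $G^\theta$, realized by $(B',T') \mapsto ((B'\cap G^\theta)^0, (T'\cap G^\theta)^0)$. Since $G^\theta(\bar k)$ acts transitively on its own Borel pairs, this bijection forces it to act transitively on $\theta$-stable Borel pairs of $G$ as well. Both $(B,T)$ (by the $\theta$-admissibility of $T$ together with our choice of $B$ satisfying $\theta(B)=B$) and $(\mathbf B, \mathbf T)$ (by preservation of the pinning) are $\theta$-stable Borel pairs, so the desired $h$ exists as the conjugating element in $G^\theta(\bar k)$.

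The main obstacle is justifying the bijection above, in particular the torus half: one must show that a maximal torus $S$ of $G^\theta$ lies in a unique $\theta$-stable maximal torus of $G$, typically realized as the identity component of the centralizer $Z_G(S)$. An alternative route would be cohomological: the variety $X := \{h \in G : (B,T)^h = (\mathbf B, \mathbf T)\}$ is a $\mathbf T(\bar k)$-torsor carrying a compatible $\theta$-action, and finding a $\theta$-fixed point reduces to showing that the obstruction in $\hat H^{-1}(\langle\theta\rangle, \mathbf T(\bar k))$ vanishes; verifying this directly would require exploiting the specific structure of $\mathbf T$ as a $\theta$-module (writing $\theta(h_0) = h_0 s_0$ with $N_\theta(s_0) = 1$ and solving $s_0 = t^{-1}\theta(t)$), which appears less clean than the Steinberg approach.
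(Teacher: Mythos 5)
Your argument is correct and is essentially the route the paper takes: the paper's proof consists solely of citing Steinberg's memoir (with the remark that his argument, given there for algebraically closed fields, carries over to separably closed ones), and your reduction to the transitivity of $G^\theta(\bar k)$ on $\theta$-stable Borel pairs via the correspondence with Borel pairs of $G^\theta$ is exactly the content of the Steinberg results being invoked, including the step you flag (recovering the ambient torus as $Z_G(S)^0$). The only point you leave unaddressed that the paper explicitly calls attention to is that $\bar k$ is merely separably closed here, so one must check that Steinberg's proof does not use algebraic closedness.
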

\begin{proof}
Steinberg \cite{St} proved this for algebraically closed fields.  Fortunately
his proof carries over to the case of separably closed fields. 
\end{proof}
 
We now choose $h$ as in the lemma. We use $\Int(h^{-1})$ to
transport $\sigma_T$ to $\mathbf T$, and we define $\omega_T(\sigma)$ as
before.   We use our chosen twisted $a$-data to form  the elements
$x(\zeta)$ in equation \eqref{eq.zeta}.  

\begin{lemma}\label{lem.fix}
Let $\zeta$ be an automorphism of $R(G,\mathbf T)$ that commutes with the
natural action of $\theta$ on $\mathbf T$. Then the  element $x(\zeta)\in
\mathbf T(\bar k)$ lies in $\mathbf T^\theta$. In particular this is so for
the automorphisms
$\sigma_T$ of $\mathbf T$, and also for the automorphisms induced by 
elements in $\mathbf \Omega^\theta$. 
\end{lemma}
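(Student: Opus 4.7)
The plan is to apply $\theta$ directly to the defining product \eqref{eq.zeta} and show it reproduces $x(\zeta)$. Three ingredients go into this: the compatibility $\theta(\alpha^\vee)=(\theta\alpha)^\vee$, the $\theta$-invariance built into twisted $a$-data, and the fact that $\theta$ permutes the index set $R(\zeta)$.

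First, since $\theta$ stabilizes $\mathbf{T}$ it acts on $X_*(\mathbf{T})$ compatibly with its action on $X^*(\mathbf{T})$, so $\theta(a^{\alpha^\vee})=a^{(\theta\alpha)^\vee}$ for every $a\in\bar k^\times$. Applying this term-by-term and reindexing by $\beta=\theta\alpha$ gives
\[
\theta(x(\zeta)) = \prod_{\beta\in\theta R(\zeta)} a_{\theta^{-1}\beta}^{\beta^\vee}.
\]
The twisted $a$-data hypothesis $a_{\theta\gamma}=a_\gamma$ then replaces $a_{\theta^{-1}\beta}$ by $a_\beta$, so the claim reduces to the permutation statement $\theta R(\zeta)=R(\zeta)$.

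For that last step I would check the two conditions defining $R(\zeta)$ separately: positivity of $\alpha$ is preserved by $\theta$ because $\theta(\mathbf{B})=\mathbf{B}$, and $\zeta^{-1}(\theta\alpha)=\theta(\zeta^{-1}\alpha)$ by the commutativity hypothesis, so $\zeta^{-1}(\theta\alpha)<0$ iff $\zeta^{-1}\alpha<0$. Combining the three ingredients yields $\theta(x(\zeta))=x(\zeta)$.

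Finally, for the two concrete cases: any $\mu\in\mathbf{\Omega}^\theta$ automatically commutes with $\theta$, so the main statement applies directly. For $\sigma_T$ one must verify that $\sigma_T$, viewed as an automorphism of $\mathbf{T}$, commutes with $\theta$. Writing $\sigma_T=\Int(h^{-1}\sigma(h))\circ\sigma_{\mathbf{T}}$, the two factors commute with $\theta$ separately: $\sigma_{\mathbf{T}}$ does because $\theta$ is defined over $k$ and stabilizes $\mathbf{T}$, and the inner factor does because $\theta(h^{-1}\sigma(h))=h^{-1}\sigma(h)$, using precisely the property $\theta(h)=h$ guaranteed by the preceding lemma. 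The main obstacle is conceptual rather than technical: keeping straight the two actions of $\Gamma$ on $\mathbf{T}$ and recognizing that the choice $h\in G^\theta(\bar k)$ (as opposed to an arbitrary $h$) is exactly what forces $\sigma_T$ to commute with $\theta$, which is why the preceding existence lemma is indispensable here.
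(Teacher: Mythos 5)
Your proof is correct and follows essentially the same route as the paper's: show $\theta$ permutes the index set $R(\zeta)$ (using $\theta(\mathbf B)=\mathbf B$ and the commutation hypothesis) and then invoke the $\theta$-invariance of the twisted $a$-data. You additionally spell out why $\sigma_T$ and elements of $\mathbf\Omega^\theta$ satisfy the hypothesis — in particular that $h\in G^\theta(\bar k)$ forces $\sigma_T$ to commute with $\theta$ — which the paper leaves implicit but is exactly right.
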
 
\begin{proof} 
 We claim that the automorphism $\theta$ preserves
$R(\zeta)$.  This, together with the invariance property
$a_{\theta(\alpha)}=a_\alpha$ of our twisted $a$-data, proves that
$\theta(x(\zeta))=x(\zeta)$. The claim holds because (i) $\zeta$ commutes
with
$\theta$, and (ii) $\theta$ preserves the set of positive roots of $\mathbf
T$.   
\end{proof} 

We use $h$ and our twisted $a$-data to form the $1$-cocycle $t(\sigma)$ of
$\Gamma$ in $T$ (see \eqref{eq.t}). 

\begin{proposition}
The $1$-cocycle $t(\sigma)$  takes values in the subtorus
$T^\theta$. Its class in $H^1(k,T^\theta)$ is independent of the choice of
$B$ and
$h$.   
\end{proposition}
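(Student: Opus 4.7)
The plan is to reduce the two assertions to the corresponding calculations already carried out in \cite{LS}, checking at each step that the correction terms that arise land in the $\theta$-fixed subgroups rather than merely in $T$ or $\mathbf T$. The three ingredients that make this bookkeeping work are our standing assumptions $h \in G^\theta(\bar k)$ and $\theta(B)=B$, the $\theta$-invariance of the $a$-data, and the compatibility \eqref{eq.ntheq}.

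For the first claim, since $\theta(h)=h$, it suffices to show that $m(\sigma)=x(\sigma_T)n(\omega_T(\sigma))$ is $\theta$-fixed. The element $h^{-1}\sigma(h)$ is visibly $\theta$-fixed (using $\theta h = h$ and the commutation of $\theta$ with $\sigma$), so its image $\omega_T(\sigma)$ lies in $\mathbf\Omega^\theta$, and \eqref{eq.ntheq} yields $\theta(n(\omega_T(\sigma))) = n(\omega_T(\sigma))$. For $x(\sigma_T)$, a direct unwinding of the formula $\sigma_T(t) = h^{-1}\sigma(hth^{-1})h$ shows that $\sigma_T$ commutes with $\theta$ on $\mathbf T$, so Lemma \ref{lem.fix} applies and places $x(\sigma_T)$ in $\mathbf T^\theta$.

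For independence of $h$ with $B$ fixed, two permissible choices $h, h' \in G^\theta(\bar k)$ differ by an element $g := h^{-1}h'$ that normalizes $\mathbf B$ and $\mathbf T$, hence lies in $\mathbf T^\theta(\bar k)$. The same computation as in the proof of Lemma 2.3.A of \cite{LS} shows that the two resulting cocycles differ by the coboundary of $hgh^{-1}$, which lies in $T^\theta(\bar k)$ since both $h$ and $g$ are $\theta$-fixed.

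For independence of $B$, replace $B$ by a second $\theta$-stable Borel $B' = vBv^{-1}$ with $v \in \Norm(T,G)$. The $\theta$-stability of $B'$ forces $v^{-1}\theta(v) \in T$, so the image $\mu \in \mathbf\Omega$ of $u = h^{-1}vh$ is $\theta$-fixed; by \eqref{eq.ntheq} we may therefore replace $v$ by $h n(\mu) h^{-1}$ (still in $G^\theta$ and still defining $B'$) and take $h' := h n(\mu) \in G^\theta$, which sends $(B',T)$ to $(\mathbf B,\mathbf T)$. The computation from the proof of Lemma 2.3.B of \cite{LS} then shows that $t'(\sigma)$ and $t(\sigma)$ differ by the coboundary of $h x(\mu) h^{-1}$; since $\mu \in \mathbf\Omega^\theta$ commutes with $\theta$ on $\mathbf T$, Lemma \ref{lem.fix} places $x(\mu)$ in $\mathbf T^\theta$, so this coboundary takes values in $T^\theta(\bar k)$. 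The previous paragraph then absorbs the freedom to choose a different $h'$. The only potentially subtle point in the whole argument is this last move---arranging that the change of Borel can be effected by an element that remains inside $G^\theta$---but \eqref{eq.ntheq} together with $\mu \in \mathbf\Omega^\theta$ makes exactly this possible.
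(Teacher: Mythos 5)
Your argument is correct and follows essentially the same route as the paper: $\theta$-fixedness of $m(\sigma)$ via Lemma \ref{lem.fix} and \eqref{eq.ntheq}, and independence of $B$ via the coboundary of $h x(\mu)h^{-1}\in T^\theta(\bar k)$ for $\mu\in\mathbf\Omega^\theta$ (your extra care in arranging the change of Borel inside $G^\theta$ is exactly what the paper's citation of \eqref{eq.cob} implicitly relies on). The only point you leave unaddressed is that $T^\theta$ is connected, hence a subtorus, which follows from $G$ being semisimple and simply-connected and is needed to justify the word ``subtorus'' in the statement.
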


\begin{proof}
 That $T^\theta$ is connected and hence a subtorus of $T$
follows from our assumption that $G$ is semisimple and simply-connected. 

Next we show that $t(\sigma)$ is fixed by $\theta$. 
It follows from the previous lemma that $x(\sigma_T)$ is fixed by $\theta$. 
Since $\sigma_T$ and $\sigma_{\mathbf T}$ both commute with $\theta$, so too
does $\omega_T(\sigma)$, and it then  follows from equation \eqref{eq.ntheq}
that 
$n(\omega_T(\sigma))$ is fixed by $\theta$. We conclude that
$m(\sigma)=x(\sigma_T)n(\omega_T(\sigma))$ is also fixed by $\theta$.
Finally, it follows from \eqref{eq.t} that $t(\sigma)$ is  fixed by
$\theta$.

It is obvious that the cohomology class of $t(\sigma)$ is independent of the
choice of $h$. That it is also independent of the choice of $\theta$-stable
$B$ containing $T$ follows from the fact the element $hx(\mu)h^{-1}$ 
occurring  in \eqref{eq.cob}  is
fixed  by $\theta$ when $\mu \in \mathbf\Omega^\theta$, and this too is a 
consequence of the previous lemma. 
\end{proof}

 The class of $t(\sigma)$ in $H^1(k,T^\theta)$ is the desired twisted
splitting invariant and will be denoted by
$\lambda_{\{a_\alpha\}}(T,\theta)$.

\section{Comparison of $\Delta^{\new}_I$ with $\Delta_I$}\label{sec.3}

 As in the previous
section $G$ is semisimple and simply-connected, and $T$ is a 
$\theta$-admissible maximal $k$-torus in $G$. In this situation $T^\theta$ is
connected and hence a subtorus of $T$. We fix a $\theta$-stable Borel
subgroup (over $\bar k$) containing $T$. As positive system $R^+(G,T)$ we
take the roots of $T$ in $B$. For $\alpha \in R(G,T)$ we denote by
$\alpha_{\res}$ the restriction of $\alpha$ to $T^\theta$, and we put 
$R_{\res}(G,T)=\{\alpha_{\res}: \alpha \in R(G,T)\}$. We denote by
$\pi:R(G,T) \twoheadrightarrow R_{\res}(G,T)$ the map $\alpha \mapsto
\alpha_{\res}$. 

\subsection{Review of some results of Steinberg} We remind the reader of the
following results of Steinberg concerning the relation between
$R_{\res}(G,T)$ and $R(G,T)$ for $\theta$-admissible $T$. The
results are purely root-theoretic and so the characteristic of the ground
field plays no role here. 

\begin{theorem}[Steinberg] \hfill 
\begin{enumerate}
\item $R_{\res}(G,T)$ is a root system in $X^*(T^\theta)$, possibly
non-reduced, whose Weyl group will be denoted by $\Omega_{\res}(G,T)$. 
\item $R^+_{\res}(G,T):=\{\alpha_{\res}:\alpha \in R^+(G,T)\}$ is a positive
system in  $R_{\res}(G,T)$. 
\item The set of simple roots in $R_{\res}(G,T)$ is the image under $\pi$ of
the set of simple roots in $R(G,T)$. The set of simple roots in $R(G,T)$ is
the preimage under $\pi$ of the set of simple roots in $R_{\res}(G,T)$. 
\item The map $\pi$ induces a bijection from the set of orbits of $\theta$
in $R(G,T)$ to the set $R_{\res}(G,T)$. 
\item There is a unique homomorphism $\Omega_{\res}(G,T) \to \Omega(G,T)$
for which the restriction map $X^*(T) \twoheadrightarrow X^*(T^\theta)$ is
$\Omega_{\res}(G,T)$-equivariant. This homomorphism identifies
$\Omega_{\res}(G,T)$ with $\Omega(G,T)^\theta$. 
\item Let $\beta$ be a simple root in $R_{\res}(G,T)$. We consider the
subset $\mathbb Z\beta \cap R_{\res}(G,T)$, the intersection being taken in 
$X^*(T^\theta)$, and we denote by $M_\beta$ the unique Levi subgroup of $G$
over $\bar k$ that contains $T$ and has root system $\pi^{-1}(\mathbb Z \beta
\cap R_{\res}(G,T))$. Since this preimage is stable under $\theta$, so too is
$M_\beta$. Under the isomorphism $\Omega_{\res}(G,T) \simeq
\Omega(G,T)^\theta$ the simple reflection $\omega(\beta) \in
\Omega_{\res}(G,T)$ corresponds to the longest element in the Weyl group
$\Omega(M_\beta,T)$. When $2\beta$ is not a restricted root, the Dynkin
diagram of $M_\beta$ is a disjoint union of copies of $A_1$, these being
permuted transitively by $\theta$. When $2\beta \in R_{\res}(G,T)$, the
Dynkin diagram of $M_\beta$ is a disjoint union of copies of $A_2$, these
being permuted transitively by $\theta$; moreover, if there are $r$ copies
of $A_2$, then $\theta^r$ preserves each copy and acts nontrivially on it.  
\end{enumerate}
\end{theorem}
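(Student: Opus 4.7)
The plan is to reduce the entire theorem to a combinatorial analysis of how $\theta$ acts on the set $\Pi$ of simple roots of $R^+(G,T)$: since $B$ is $\theta$-stable, $\theta$ permutes $\Pi$, and the rest is classical root-system bookkeeping due originally to Steinberg. I would first establish the local structure described in (6) and then deduce (1)--(5) from it.

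The crux of the argument is the dichotomy for each $\theta$-orbit $\mathcal{O}\subset\Pi$. For two simple roots $\alpha,\alpha'\in\mathcal{O}$, the pairing $\langle\alpha,\alpha'^\vee\rangle$ is either $0$ (orthogonal) or one of $-1,-2,-3$ (as simple roots generating a rank-two subsystem). Since $\theta$ preserves root lengths, the $\theta$-conjugacy of $\alpha$ and $\alpha'$ forces them to have the same length, which rules out the $B_2$ and $G_2$ cases and leaves only pairing $-1$, generating an $A_2$. Hence the Dynkin subdiagram spanned by $\mathcal{O}$ is a disjoint union of components of type $A_1$ or $A_2$, and since $\theta$ permutes these components transitively (as $\mathcal{O}$ is a single orbit), they are all of the same type. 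This yields either case (A) of type $A_1^k$, or case (B) of type $A_2^r$ with $\theta^r$ stabilizing each copy and flipping its two simple roots, precisely the nontrivial $A_2$ diagram automorphism.

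Parts (3) and (4) then follow from this local picture: $\pi$ collapses each $A_1^k$-orbit to a single simple restricted root $\beta$ with $2\beta\notin R_{\res}$ (case A), and collapses each $A_2^r$-orbit to a pair $\{\beta,2\beta\}\subset R_{\res}$ with $\beta$ simple (case B, where the sums $\alpha+\theta^r\alpha$ restrict to $2\beta$); in particular $\pi$ is a bijection between $\theta$-orbits in $R(G,T)$ and elements of $R_{\res}(G,T)$. Part (1) is then verified by exhibiting the explicit coroots
\[
\alpha_{\res}^\vee=\sum_{\alpha'\in\pi^{-1}(\alpha_{\res})}c_{\alpha'}\,\alpha'^\vee\in X_*(T^\theta),
\]
where the weights $c_{\alpha'}$ are $1$ in case (A) and adjusted in case (B) to make $\langle\alpha_{\res},\alpha_{\res}^\vee\rangle=2$; the root-system axioms then reduce to direct computation, with non-reducedness appearing exactly in case (B). Part (2) is immediate from the definition of $R^+_{\res}(G,T)$. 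For (5), the longest element $w_\beta$ of $\Omega(M_\beta,T)$ is $\theta$-invariant (as $M_\beta$ is $\theta$-stable and the longest element is canonical) and, restricted to $T^\theta$, acts as the simple reflection $\omega(\beta)\in\Omega_{\res}(G,T)$; running over simple restricted roots $\beta$, the $w_\beta$ generate $\Omega(G,T)^\theta$ while their restrictions generate $\Omega_{\res}(G,T)$, yielding the required isomorphism.

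The main obstacle is the dichotomy between cases (A) and (B), specifically ruling out $B_2$ and $G_2$ as subsystems generated by a single $\theta$-orbit of simple roots. This hinges on the length-preserving property of $\theta$, and is precisely where type $A_{2n}$ emerges as the unique source of non-reduced restricted root systems: it is the only irreducible type admitting a diagram automorphism whose orbit of simple roots contains two adjacent elements.
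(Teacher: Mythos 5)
The paper itself offers no argument for this theorem beyond the citation ``See \cite{St}'', so there is nothing internal to compare you with; what you have written is, in outline, Steinberg's own proof, and the overall structure --- reduce everything to the action of $\theta$ on the set of simple roots, establish the $A_1$-versus-$A_2$ dichotomy for a $\theta$-orbit, and deduce (1)--(5) from the local picture in (6) --- is the right one. Your identification of the coroots $N(\alpha^\vee)$, resp.\ $2N(\alpha^\vee)$, and of the divisible roots $2\beta$ as restrictions of the sums $\alpha+\theta^r\alpha$ in case (B), is also correct.

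Two steps are asserted, however, where they need proof. First, your dichotomy argument only shows that every \emph{edge} of the subdiagram spanned by an orbit $\mathcal O$ is a simple bond; it does not bound the size of a connected component of that subdiagram (an $A_3$ component is consistent with everything you have checked). What closes this is the transitivity you invoke only afterwards: $\langle\theta\rangle$ acts transitively on $\mathcal O$ and permutes the components, so the stabilizer of a component acts transitively on its vertices, and a tree with a vertex-transitive automorphism group has at most two vertices (its center, a vertex or an edge, is preserved by every automorphism). Second, and more seriously, the identification $\Omega_{\res}(G,T)\simeq\Omega(G,T)^\theta$ is the substantive content of the theorem, and you simply assert that the elements $w_\beta$ generate $\Omega(G,T)^\theta$. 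This requires the length induction: if $1\ne w\in\Omega(G,T)^\theta$, choose a simple $\alpha$ with $w\alpha<0$; since $w$ commutes with $\theta$ and $\theta$ preserves positivity, $w$ makes the whole orbit $\pi^{-1}(\beta)\cap\Delta$, and hence all of $R^+(M_\beta,T)$, negative, so $\ell(ww_\beta)=\ell(w)-\ell(w_\beta)<\ell(w)$ with $ww_\beta$ again $\theta$-fixed. Injectivity of the restriction $\Omega(G,T)^\theta\to\Omega_{\res}(G,T)$ also deserves a word (e.g.\ a $\theta$-fixed $w$ acting trivially on $X^*(T^\theta)\otimes\mathbb Q\simeq(X^*(T)\otimes\mathbb Q)^\theta$ fixes the regular element $\rho$, hence is trivial); this is what makes the reflection $\omega(\beta)$ correspond to $w_\beta$ rather than merely to some $\theta$-fixed lift. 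With these two points supplied, the remaining verifications (the root-system axioms for $R_{\res}(G,T)$ and the bijection in (4)) are routine as you indicate.
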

\begin{proof}
See \cite{St}. 
\end{proof}

\subsection{Comparison  of twisted $a$-data with the $a$-data  used in
\cite{KS}} Part (4) of Steinberg's theorem allows us to view twisted
$a$-data $\{a_\alpha\}$ for $T$ as $a$-data $\{a_\beta\}$ for the restricted
root system, the two viewpoints being related by the equalities
$a_{\alpha_{\res}}=a_\alpha$. We say that $a$-data for $R_{\res}(G,T)$ are
\emph{special} if 
\[
a_{2\beta}=a_\beta 
\]
 whenever both $\beta$, $2\beta$ are  restricted roots. Only special
$a$-data were considered in \cite{KS} (though the word ``special'' was not
used there). Now that we have introduced twisted splitting invariants, we
may as well allow arbitrary $a$-data. Indeed, it would be awkward to compare
$\Delta_I^{\new}$ with $\Delta_I$ without doing so, as the first part of
 Proposition \ref{lem.main} can only be formulated using the
non-special  twisted 
$a$-data $\{\tilde a_\alpha\}$ appearing  there. 

\subsection{Review of $\Delta_I$} \label{sub.revDI} 
In this subsection we assume that $k$ is a local field $F$ of characteristic
$0$, so that $\Delta_I$ is defined. We now review the relevant definitions. 
Because $G$ is quasi-split and $\theta$ preserves our given $F$-splitting,
we should use the $\Delta_I$ specified in section 5.3 of \cite{KS}. In other
words, when we use the definition in 
\cite[4.2]{KS} to form $\Delta_I$, we should use the
$F$-splitting of $G^\theta$ obtained from our given $F$-splitting of $G$. 
  
We now recall the definition of  this $F$-splitting of $G^\theta$.    The
group $G^\theta$ is quasi-split, connected, reductive, with maximal torus
$\mathbf T^\theta$ and Borel subgroup $\mathbf B^\theta$, and the root
system $R(G^\theta,\mathbf T^\theta)$ is the set of indivisible roots in 
$R_{\res}(G,\mathbf T)$. The set of simple roots in $R(G^\theta,\mathbf
T^\theta)$ coincides with the set of simple roots in $R_{\res}(G,\mathbf T)$.
We complete the pair $(\mathbf B^\theta,\mathbf T^\theta)$ to an
$F$-splitting
$(\mathbf B^\theta,\mathbf T^\theta,\{X_\beta\})$ of $G^\theta$ by putting 
\[
X_\beta:=\sum_{\alpha \in  \pi^{-1}(\beta)} X_\alpha
\] 
for every simple root $\beta$ of $R_{\res}(G,T)$. 

We now choose special $a$-data $\{a_\beta\}$ on $R_{\res}(G,T)$. 
In \cite{KS} the term $\Delta_I$ was defined by 
\[
\Delta_I=\langle \lambda_{\{a_\beta\}}(T^\theta),s_{T,\theta} \rangle
\]
for a certain element $s_{T,\theta} \in (\hat T)^\Gamma_{\theta}$ that will
be discussed further in the next subsection. The pairing $\langle
\cdot,\cdot
\rangle$ is the Tate-Nakayama pairing between
$H^1(F,T^\theta)$ and $(\hat T)^\Gamma_\theta$.  
Note that $(\hat T)_\theta$ is 
Langlands dual to $T^\theta$. 
 The splitting invariant occurring in this
definition is the untwisted one from
\cite{LS} for the group $G^\theta$, and is formed using the given
$a$-data on
$R(G^\theta,T^\theta) \subset R_{\res}(G,T)$ and the $F$-splitting of
$G^\theta$ specified above. It is worth noting that no information is lost
when one restricts special $a$-data to the subset $R(G^\theta,T^\theta)$ of
indivisible roots in $R_{\res}(G,T)$

\subsection{Definition of $\Delta_I^{\new}$} In this section $k$ is a local
field $F$ of arbitrary characteristic. We consider twisted $a$-data
$\{a_\alpha\}$ for $R(G,T)$ (equivalently, $a$-data $\{a_\beta\}$ for
$R_{\res}(G,T)$) and use it to form the twisted splitting invariant
$\lambda_{\{a_\alpha\}}(T,\theta)$. We then put 
\[
\Delta_I^{\new}=\langle \lambda_{\{a_\alpha\}}(T,\theta),s_{T,\theta} \rangle
\] 
The element $s_{T,\theta}$ is the same as the one used to define $\Delta_I$.
The term $\Delta_I^{\new}$ depends on the  choice of $\theta$-invariant
$F$-splitting $(\mathbf B,\mathbf T,\{X_\alpha\})$ as well as the choice of
twisted $a$-data. 

Our next goal is to understand the dependence of $\Delta_I^{\new}$ on the
choice of twisted $a$-data. Before doing so we must  review some  material
from 
\cite{LS} and \cite{KS}. 
As in \cite{KS} there are three types of restricted roots: 
\begin{enumerate}
\item type $R_1$, for which neither $2\beta$ nor $\frac{1}{2}\beta$ is a
root, 
\item type $R_2$, for which $2\beta$ is a root, 
\item type $R_3$, for which $\frac{1}{2}\beta$ is a root.
\end{enumerate} 
The indivisible roots are  the ones of types $R_1$ and $R_2$. 

As in \cite{LS} and \cite{KS} there are two kinds of orbits $\mathcal O$ of
$\Gamma$ in $R_{\res}(G,T)$: 
\begin{enumerate}
\item symmetric, for which $\beta \in \mathcal O \implies -\beta \in
\mathcal O$, 
\item asymmetric, for which $\beta \in \mathcal O \implies -\beta \notin
\mathcal O$.
\end{enumerate} 
For $\beta$ lying in a symmetric orbit the field of definition $F_\beta$ of
$\beta$ is a separable quadratic extension of the field of definition
$F_{\pm\beta}$ of $\pm\beta$. We denote by $\sgn_{F_\beta/F_{\pm\beta}}$ the
sign character on $F^\times_{\pm\beta}$ associated by local class field
theory to the quadratic extension $F_\beta/F_{\pm\beta}$. 

We also need to review $s_{T,\theta}$. It comes from twisted endoscopic
data $(H,\dots)$, but for the present purposes  we only need to know 
what it means to say that a restricted root \emph{comes from $H$}.  
To understand this point one  must remember that the coroot system for a 
twisted endoscopic group $H$ is a subsystem of the restricted root system 
of the Langlands dual group  $\hat G$. The element $s_{T,\theta}$ tells us
what this subsystem is, in a way that we will now  recall. For this 
the reader may find it helpful to consult the discussion of twisted
centralizers on page 16 of \cite{KS}.

We need one more piece of notation. For
$\alpha^\vee \in R^\vee(G,T)$ we denote by $N(\alpha^\vee) \in X^*(\hat T)$
the sum of the elements in the $\theta$-orbit of $\alpha^\vee$. Then
$N(\alpha^\vee) \in X^*(\hat T)^\theta = X^*((\hat T)_\theta)$, which is to
say that $N(\alpha^\vee)$ may be viewed as a character on $(\hat T)_\theta$. 

The coroot system of $(H,T_\theta)$ can be identified with a certain subset
of the set of $\theta$-orbits in the coroot system $R^\vee(G,T)$. The
$\theta$-orbit of $\alpha^\vee \in R^\vee(G,T)$ lies in this subset when one
of the following two conditions holds 
\begin{enumerate}
\item $\alpha$ is of type $R_1$ or $R_2$, and
$(N(\alpha^\vee))(s_{T,\theta})=1$, or 
\item $\alpha$ is of type $R_3$ and $(N(\alpha^\vee))(s_{T,\theta})=-1$.
\end{enumerate}
 As in \cite{KS} we say that a
restricted root $\beta=\alpha_{\res}\in R_{\res}(G,T)$ \emph{comes from $H$}
when $\alpha^\vee$ satisfies either (1) or (2); this condition is obviously
independent of the choice of $\alpha$ such that $\alpha_{\res}=\beta$. 

Now we are almost ready to explain how $\Delta_I^{\new}$ depends on the
choice of twisted $a$-data. We continue to view
twisted $a$-data as being $a$-data $\{a_\beta\}$ for the restricted root
system $R_{\res}(G,T)$, and we do not assume that it is special. 
We want to see how $\Delta_I^{\new}$ changes when $\{a_\beta\}$ is replaced
by another choice $\{a'_\beta\}$ of $a$-data on $R_{\res}(G,T)$.  

Write $a'_\beta=a_\beta b_\beta$ and note that $b_\beta \in
F^\times_{\pm\beta}$. Thus the sign
$\sgn_{F_\beta/F_{\pm\beta}}(b_\beta)$ is defined whenever $\beta$ lies in a
symmetric $\Gamma$-orbit. 

\begin{lemma}\label{lem.aa'} 
When $\{a_\beta\}$ is replaced by $\{a_\beta'\}$ the term $\Delta_I^{\new}$
is multiplied by the sign 
\begin{equation}
\prod_\beta \sgn_{F_\beta/F_{\pm\beta}}(b_\beta) 
\end{equation}
where the product is taken over a set of representatives for the symmetric
$\Gamma$-orbits in the set of elements $\beta \in R_{\res}(G,T)$ that
satisfy one of the following two conditions:
\begin{enumerate}
\item $\beta$ is of type $R_3$ and comes from $H$, or 
\item $\beta$ is not of type $R_3$ and does not come from $H$. 
\end{enumerate} 
Note that the set of elements satisfying one of these two conditions is
$\Gamma$-stable and hence  a union of
$\Gamma$-orbits. 
\end{lemma}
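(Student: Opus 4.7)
The plan is to make the dependence of the twisted splitting invariant on the $a$-data completely explicit, and then to apply Tate--Nakayama orbit by orbit.

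First I would isolate the difference cocycle. Extend $\{b_\beta\}$ to a function on $R(G,T)$ by $b_\alpha := b_{\alpha_{\res}}$; one checks that this extension is $\theta$-invariant, $\Gamma$-equivariant, and satisfies $b_{-\alpha}=b_\alpha$ (since $a_{-\alpha}=-a_\alpha$ and similarly for $a'$) and $b_\alpha\in F_{\pm\alpha}^\times$. From the defining formula \eqref{eq.zeta} one gets $x'(\sigma_T)=x(\sigma_T)\cdot c(\sigma)$ with
\[
c(\sigma):=\prod_{\alpha\in R(\sigma_T)} b_\alpha^{\alpha^\vee},
\]
and hence $t'(\sigma)=t(\sigma)\cdot d(\sigma)$ where $d(\sigma):=hc(\sigma)h^{-1}$. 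Applying Lemma~\ref{lem.fix} to $\zeta=\sigma_T$ shows that $c(\sigma)\in\mathbf T^\theta(\bar k)$, so $d$ is a $1$-cocycle of $\Gamma$ with values in $T^\theta$ and $\lambda_{\{a'_\alpha\}}(T,\theta)=\lambda_{\{a_\alpha\}}(T,\theta)\cdot[d]$ in $H^1(F,T^\theta)$. It remains to evaluate the Tate--Nakayama pairing $\langle [d],s_{T,\theta}\rangle$.

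Next I would compute the pairing orbit by orbit. Decomposing the cocharacter lattice of $T^\theta$ into $\Gamma$-stable pieces indexed by $\Gamma$-orbits of restricted roots $\beta$ (equivalently, by $\Gamma$-orbits of $\theta$-orbits of coroots $\alpha^\vee$), Shapiro's lemma expresses the contribution of each piece as an $H^1$-class over the field of definition of the orbit, paired with the local component of $s_{T,\theta}$. For an asymmetric orbit, Hilbert~90 makes the relevant $H^1$ vanish, so asymmetric orbits contribute trivially; this is exactly parallel to the computation in Lemma~3.2.A of \cite{LS}. For a symmetric orbit represented by $\beta=\alpha_{\res}$ the contribution is the image of $b_\beta\in F_{\pm\beta}^\times$ under a quadratic character. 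That $N(\alpha^\vee)(s_{T,\theta})=\pm 1$ on symmetric orbits follows from $\Gamma$-invariance of $s_{T,\theta}$: if $\sigma\in\Gamma$ sends $\alpha$ to $-\alpha$, then $\sigma\cdot N(\alpha^\vee)=-N(\alpha^\vee)$, and $\Gamma$-invariance forces $N(\alpha^\vee)(s_{T,\theta})^2=1$. When $N(\alpha^\vee)(s_{T,\theta})=1$ the character is trivial; when it equals $-1$ it is the unique nontrivial quadratic character of $F_{\pm\beta}^\times$ killed by $\Norm_{F_\beta/F_{\pm\beta}}$, which by local class field theory is precisely $\sgn_{F_\beta/F_{\pm\beta}}$. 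Combining with the definition recalled in the excerpt --- namely that $\beta$ comes from $H$ iff $N(\alpha^\vee)(s_{T,\theta})=1$ for $\beta$ of type $R_1$ or $R_2$ and iff $N(\alpha^\vee)(s_{T,\theta})=-1$ for $\beta$ of type $R_3$ --- the nontrivial signs arise exactly when either $\beta$ is of type $R_3$ and comes from $H$, or $\beta$ is of type $R_1$ or $R_2$ and does not come from $H$, matching the statement.

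The main obstacle is the Tate--Nakayama bookkeeping, especially correctly identifying the quadratic character in the symmetric case. The argument mirrors the untwisted analysis in \cite[\S2.3, \S3.2]{LS}, but it must be carried out with the $\theta$-equivariant data (target $T^\theta$ rather than $T$, coroots grouped into $\theta$-orbits via $N(\alpha^\vee)$) and, crucially, over an arbitrary local field, including characteristic $2$, where one cannot use shortcuts available over $\mathbb R$. One must also verify that for $\beta$ of type $R_2$ the contributions of $\beta$ and of $2\beta$ (which is of type $R_3$) are correctly tracked as independent factors, since in the twisted non-special setting one has independent $b_\beta$ and $b_{2\beta}$; these give two separate terms in the product, governed by the distinct conditions (2) and (1), respectively.
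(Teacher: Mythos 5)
Your proposal is correct and follows essentially the same route as the paper's proof: the paper establishes the claim $(N(\alpha^\vee))(s_{T,\theta})=\pm1$ for symmetric orbits exactly as you do, defers the rest to ``the method of proof of Lemmas 3.2.C and 3.2.D in \cite{LS}'' (the difference cocycle and the orbit-by-orbit Tate--Nakayama computation you spell out), and then concludes by the same translation of the condition $(N(\alpha^\vee))(s_{T,\theta})=-1$ into the two cases of the lemma. You have merely written out in detail the step the paper cites from \cite{LS}.
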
 
\begin{proof}
We claim that, when $\alpha_{\res}$ lies in a symmetric
$\Gamma$-orbit in $R_{\res}(G,T)$, the value of $N(\alpha^\vee)$ on
$s_{T,\theta}$ is $\pm 1$. Indeed, there exists $\sigma \in \Gamma$ such
that $\sigma\alpha_{\res}=-\alpha_{\res}$, and so from the
$\Gamma$-invariance of $s_{T,\theta}$ it follows that 
\[
(N(\alpha^\vee))(s_{T,\theta})=(-N(\alpha^\vee))(s_{T,\theta}). 
\] 
This just says that the square of $(N(\alpha^\vee))(s_{T,\theta})$ is
$1$, as claimed. 

The method of proof of Lemmas 3.2.C and  3.2.D in \cite{LS} applies
without change to show that 
$\Delta_I^{\new}$ is multiplied by 
\[
\prod_\beta \sgn_{F_\beta/F_{\pm\beta}}(b_\beta) 
\]
 where the product is taken over a set of representatives for the symmetric
$\Gamma$-orbits in the set 
\[
\{\alpha_{\res} \in R_{\res}(G,T): (N(\alpha^\vee))(s_{T,\theta})=-1\}. 
\]
Glancing back at what it means for $\beta$ to come from $H$, we see that the
lemma has been proved. 
\end{proof} 

\subsection{Comparison of $\Delta_I^{\new}$ and $\Delta_I$} 
In this subsection we work over a local field $F$ of characteristic $0$. 
Let us fix special $a$-data $\{a_\beta\}$ on $R_{\res}(G,T)$. For such
$a$-data both $\Delta_I$ and $\Delta_I^{\new}$ are defined, and our next goal
is to compute their ratio. 

Now $\Delta_I$ involves a splitting invariant for $T^\theta$, while
$\Delta_I^{\new}$ involves a twisted splitting invariant for $(T,\theta)$.
The former involves the liftings $n(\omega)$ for the Weyl group of
$G^\theta$, the latter  the liftings $n(\omega)$ for the 
$\theta$-fixed points  in the Weyl group of $G$. Our first task is to
compare  these two liftings, and to do so we need notation that keeps track
of which group we are using. 

For $\omega \in \mathbf\Omega$ we  write $n(\omega) \in \Norm(\mathbf
T,G)$ for the lifting of $\omega$ provided by our $\theta$-stable
$F$-splitting $(\mathbf B,\mathbf T,\{X_\alpha\})$. For $\omega \in
\mathbf\Omega^\theta$, which we also view as the Weyl group of $\mathbf
T^\theta$ in
$G^\theta$, we write $n'(\omega) \in \Norm(\mathbf T^\theta,G^\theta)$ for
the lifting of $\omega$ provided by the $F$-splitting $(\mathbf
B^\theta,\mathbf T^\theta,\{X_\beta\})$ specified in subsection
\ref{sub.revDI}.  The next lemma is valid for any field of characteristic
$0$. To formulate the lemma we need a definition: for $\alpha \in R(G,\mathbf
T)$ we put 
\[
b_\alpha:=
\begin{cases}
\frac{1}{2} &\text{ if $\alpha_{\res}$ has type $R_3$},\\ 
1 &\text{ otherwise} 
\end{cases}
\]

\begin{lemma}\label{lem.nn'} 
Let $\omega \in \mathbf \Omega^\theta$. Then both liftings of $\omega$ are
defined, and they are related by 
\[
n'(\omega)=\bigl(\prod_{\alpha \in R(\omega)}b_\alpha^{\alpha^\vee}\bigr)
n(\omega),
\] 
where $R(\omega)=\{\alpha \in R(G,\mathbf T): \alpha >
0,\omega^{-1}\alpha<0\}$. 
\end{lemma}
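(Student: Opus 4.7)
The plan is to reduce the identity to the case of simple reflections in $\mathbf{\Omega}^\theta$ by a multiplicativity argument, and then carry out an explicit calculation inside $M_\beta$ for each simple restricted root $\beta$. As a preliminary: for $\omega\in\mathbf{\Omega}^\theta$, equation \eqref{eq.ntheq} gives $\theta(n(\omega))=n(\theta(\omega))=n(\omega)$, so $n(\omega)\in\Norm(\mathbf{T}^\theta,G^\theta)(\bar k)$; since both $n(\omega)$ and $n'(\omega)$ lift $\omega$, the ratio $n'(\omega)n(\omega)^{-1}$ automatically lies in $\mathbf{T}^\theta(\bar k)$, and the lemma merely identifies it.

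Set $c(\omega):=\prod_{\alpha\in R(\omega)}b_\alpha^{\alpha^\vee}$. For $\omega_1,\omega_2\in\mathbf{\Omega}^\theta$ with $\ell(\omega_1\omega_2)=\ell(\omega_1)+\ell(\omega_2)$ in the restricted Weyl group, the inversion set $R(\omega)$ of any $\omega\in\mathbf{\Omega}^\theta$ is the $\pi$-preimage of the restricted inversion set (since it is $\theta$-stable and $\theta$-orbits in $R(G,\mathbf T)$ are the fibres of $\pi$), so the $\mathbf{\Omega}$-lengths also add and $R(\omega_1\omega_2)=R(\omega_1)\sqcup\omega_1R(\omega_2)$. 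Because $\omega_1\in\mathbf{\Omega}^\theta$ preserves the type of each restricted root and $b_\alpha$ depends only on the type of $\alpha_{\res}$, one obtains $c(\omega_1\omega_2)=c(\omega_1)\cdot\omega_1(c(\omega_2))$. Combined with $n(\omega_1)c(\omega_2)n(\omega_1)^{-1}=\omega_1(c(\omega_2))$ and the standard multiplicativity of $n$ and $n'$ along length-additive products, the identity $n'(\omega)=c(\omega)n(\omega)$ is multiplicative. This reduces the lemma to the case $\omega=\omega(\beta)$ for a simple restricted root $\beta$.

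By Steinberg's theorem, $\omega(\beta)$ is the longest element of $\Omega(M_\beta,\mathbf T)$, so $R(\omega(\beta))$ is the full set of positive roots of $M_\beta$. In type $R_1$, $M_\beta=\prod SL_2$ with $\theta$ permuting the factors transitively, and $M_\beta^\theta$ is the diagonal $SL_2$; under this identification $X_\beta=\sum_\alpha X_\alpha$ corresponds to the standard root vector of the diagonal, so $n'(\omega(\beta))=\prod_\alpha n(\alpha)=n(\omega(\beta))$, matching $c(\omega(\beta))=1$ (every $b_\alpha=1$).

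In type $R_2$, $M_\beta$ is a product of $r$ copies of $SL_3$ cyclically permuted by $\theta$ with $\theta^r$ acting on each copy as a non-trivial Dynkin automorphism; the analogous diagonal embedding reduces the computation to a single $SL_3$ factor, each copy contributing a factor $(1/2)^{(\alpha_1^{(j)}+\alpha_2^{(j)})^\vee}$. For one copy, with simple roots $\alpha_1,\alpha_2$ swapped by $\theta^r$ and $X_\beta=X_{\alpha_1}+X_{\alpha_2}=E_{12}+E_{23}\in\mathfrak{sl}_3^{\theta^r}\cong\mathfrak{so}_3$, the $\mathfrak{sl}_2$-triple $(X_\beta,H_\beta,X_{-\beta})$ needed for the Tits-style formula $n'(\omega(\beta))=\exp(X_\beta)\exp(-X_{-\beta})\exp(X_\beta)$ has $H_\beta=\operatorname{diag}(2,0,-2)$ (twice the restriction of $(\alpha_1+\alpha_2)^\vee$, because the normalization $\langle\beta,\beta^\vee\rangle=2$ in $SO_3$ forces it) and hence $X_{-\beta}=2(E_{21}+E_{32})$; a direct matrix computation, compared against $n(\omega(\beta))=n(\alpha_1)n(\alpha_2)n(\alpha_1)$, gives $n'(\omega(\beta))=(1/2)^{(\alpha_1+\alpha_2)^\vee}n(\omega(\beta))$. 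Since $(\alpha_1+\alpha_2)_{\res}=2\beta$ is the unique positive root of $M_\beta$ of type $R_3$, this matches $c(\omega(\beta))$ exactly. The main obstacle is precisely this matrix computation; the factor $1/2$ traces back to the re-normalization of the coroot of $\beta$ inside $SO_3$, which forces a doubling of $X_{-\beta}$ that propagates as a compensating $1/2$ through the exponential formula.
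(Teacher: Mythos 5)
Your argument is correct and follows essentially the same route as the paper: reduce to simple reflections in $\mathbf\Omega^\theta$ via reduced expressions and the cocycle property of $\omega\mapsto\prod_{\alpha\in R(\omega)}b_\alpha^{\alpha^\vee}$, then compute in the rank-one Levi subgroups $M_\beta$, where the $A_1$ case is trivial and the $A_2$ case is the $SL_3$ computation of the appendix. Your $\mathfrak{sl}_2$-triple $(E_{12}+E_{23},\,\mathrm{diag}(2,0,-2),\,2(E_{21}+E_{32}))$ and the resulting $n'_3=(\tfrac12)^{(\alpha_1+\alpha_2)^\vee}n_3$ are exactly the data the paper obtains via the homomorphism $\Ad':SL(2)\to SL(3)$, so the two presentations of the key matrix calculation coincide.
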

\begin{proof}
An easy argument using reduced expressions shows that it is enough to prove
this for simple reflections in
$\mathbf\Omega^\theta$. Then, by part (6) of Steinberg's theorem, the lemma
reduces to a calculation in root systems of type $A_1$ and $A_2$. The case
of $A_1$ is trivial. The case of $A_2$ is easy but 
interesting. We have included the calculation in an appendix. 
\end{proof}

Now we can formulate the main result of this section. 
\begin{proposition}\label{lem.main}
\hfill 
\begin{enumerate}
\item Using our given special $a$-data $\{a_\beta\}$, we define  another set 
$\{\tilde a_\beta\}$ of $a$-data on
$R_{res}(G,T)$ by the rule 
\begin{equation*}
\tilde a_\beta=
\begin{cases}
\frac{1}{2}a_\beta &\text{ if $\beta$ has type $R_3$},\\ 
a_\beta &\text{otherwise}. 
\end{cases}
\end{equation*} 
 We then have the equality 
\[\lambda_{\{a_\beta\}}(T^\theta)=\lambda_{\{\tilde
a_{\beta}\}}(T,\theta).\] 
Notice that $\{\tilde a_\beta\}$ is non-special when restricted roots of type
$R_3$ exist.
\item There is an equality 
\begin{equation*}
\frac{\Delta_I^{\new}}{\Delta_I}=\prod_\beta
\sgn_{F_\beta/F_{\pm\beta}}(2)
\end{equation*}
where the product is taken over a set of representatives $\beta$ for the
symmetric
$\Gamma$-orbits in the set of restricted roots having type $R_3$ and coming
from $H$.
\end{enumerate}
\end{proposition}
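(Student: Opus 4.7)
My strategy is to establish part (1) as a direct comparison of $1$-cocycles, and then derive part (2) quickly from Lemma \ref{lem.aa'}.

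For part (1), I compute both splitting invariants using the \emph{same} data. Choose a $\theta$-stable Borel $B \supset T$ and $h \in G^\theta(\bar k)$ with $(B,T)^h = (\mathbf B, \mathbf T)$; then $B \cap G^\theta$ is a Borel of $G^\theta$ containing $T^\theta$, and the same $h$ conjugates it to $(\mathbf B^\theta, \mathbf T^\theta)$. The element $\omega_T(\sigma)$ lies in $\mathbf\Omega^\theta$, and under the Steinberg isomorphism $\mathbf\Omega^\theta \simeq \Omega(G^\theta, \mathbf T^\theta)$ it coincides with the Weyl element used for the splitting invariant of $G^\theta$. Writing the resulting $1$-cocycles as
\begin{equation*}
t(\sigma) = h\,\tilde x(\sigma_T)\,n(\omega_T(\sigma))\,\sigma(h^{-1}),\qquad t'(\sigma) = h\,x'(\sigma_T)\,n'(\omega_T(\sigma))\,\sigma(h^{-1}),
\end{equation*}
and substituting $\tilde a_\alpha = b_\alpha \cdot a_{\alpha_{\res}}$ into $\tilde x$ (with $b_\alpha$ as in Lemma \ref{lem.nn'}), the factor $\prod_\alpha b_\alpha^{\alpha^\vee}$ appearing in $\tilde x$ exactly cancels the one arising from the Lemma \ref{lem.nn'} relation $n'(\omega_T(\sigma)) = \bigl(\prod_\alpha b_\alpha^{\alpha^\vee}\bigr) n(\omega_T(\sigma))$. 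The desired equality $t(\sigma) = t'(\sigma)$ therefore reduces to the identity
\begin{equation*}
\prod_{\alpha \in R(\omega_T(\sigma))} a_{\alpha_{\res}}^{\alpha^\vee} \;=\; \prod_{\beta \in R(G^\theta,\mathbf T^\theta),\ \omega_T(\sigma)^{-1}\beta<0} a_\beta^{\beta^\vee}.
\end{equation*}

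Since $\omega_T(\sigma)$ commutes with $\theta$, positivity of $\omega_T(\sigma)^{-1}\alpha$ depends only on $\alpha_{\res}$, so the left-hand side groups cleanly by $\theta$-orbits, hence via the bijection $\pi$ by restricted roots. For each $\beta$ of type $R_1$, the single $\theta$-orbit in $\pi^{-1}(\beta)$ contributes $a_\beta^{N(\alpha^\vee)}$, and I verify the coroot identity $\beta^\vee = N(\alpha^\vee)$. For $\beta$ of type $R_2$, the orbits restricting to $\beta$ and to the accompanying $R_3$ root $2\beta$ both contribute on the left, giving $a_\beta^{N(\alpha^\vee)}\cdot a_{2\beta}^{N(\alpha'^\vee)} = a_\beta^{N(\alpha^\vee)+N(\alpha'^\vee)}$ after using the special-$a$-data relation $a_{2\beta}=a_\beta$, and this must match $a_\beta^{\beta^\vee}$ on the right via the coroot identity $\beta^\vee = N(\alpha^\vee)+N(\alpha'^\vee)$. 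Verifying these two coroot identities is the main obstacle; Steinberg's description of $M_\beta$ as a union of components of type $A_1$ or $A_2$ reduces the check to rank-one and rank-two root systems, with only the $A_2$ case (illustrated by $SL_3$ under transpose-inverse, and closely parallel to the $A_2$ computation already deferred to the appendix for Lemma \ref{lem.nn'}) requiring any genuine work.

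For part (2), part (1) gives $\Delta_I = \langle \lambda_{\{\tilde a_\beta\}}(T,\theta),\, s_{T,\theta}\rangle$, whereas by definition $\Delta_I^{\new} = \langle \lambda_{\{a_\beta\}}(T,\theta),\, s_{T,\theta}\rangle$. Writing $a_\beta = \tilde a_\beta \cdot d_\beta$ with $d_\beta = 2$ if $\beta$ has type $R_3$ and $d_\beta = 1$ otherwise, Lemma \ref{lem.aa'} yields
\begin{equation*}
\frac{\Delta_I^{\new}}{\Delta_I} \;=\; \prod_\beta \sgn_{F_\beta/F_{\pm\beta}}(d_\beta),
\end{equation*}
the product taken over a set of representatives for the symmetric $\Gamma$-orbits of restricted roots satisfying either (i) type $R_3$ and from $H$, or (ii) not of type $R_3$ and not from $H$. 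Alternative (ii) contributes trivially since $d_\beta = 1$ there, leaving only the $R_3$-and-from-$H$ orbits, each contributing $\sgn_{F_\beta/F_{\pm\beta}}(2)$. This is the claimed formula.
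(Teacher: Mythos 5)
Your proposal is correct and follows essentially the same route as the paper: fix a common $h\in G^\theta(\bar k)$, reduce the equality of splitting invariants to $m(\sigma)=m'(\sigma)$, absorb the factors $b_\alpha^{\alpha^\vee}$ via Lemma \ref{lem.nn'}, and check the coroot identities by reduction to $A_1$ and $A_2$; part (2) then follows from Lemma \ref{lem.aa'} exactly as you describe. Your phrasing of the type-$R_2$ coroot identity as $\beta^\vee=N(\alpha^\vee)+N(\alpha'^\vee)$ agrees with the paper's $\beta^\vee=2N(\alpha^\vee)$ since $N(\alpha^\vee)=N(\alpha'^\vee)$ in the $A_2$ configuration.
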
 

 Consequently, replacing $\Delta_I$ by $\Delta_I^{\new}$ has 
the same effect as modifying $\Delta_{II}$ in the way proposed by
Waldspurger \cite{W2}. 

\begin{proof}
(2) follows from (1) and Lemma \ref{lem.aa'}, so it suffices to prove (1). 

In order to define the twisted splitting invariant we need to choose $h \in
G^\theta(\bar F)$ such that $(B,T)^h=(\mathbf B,\mathbf T)$. We then have
$(B^\theta,T^\theta)^h=(\mathbf B^\theta,\mathbf T^\theta)$, so $h$ also
serves to define the untwisted splitting invariant for $T^\theta$. 

Our task is then to compare  the $1$-cocycle
$t(\sigma)$ (see section \ref{sec.2}) 
 obtained from $\{\tilde a_\beta\}$ and $(G,T,\theta)$ with the
$1$-cocycle $t'(\sigma)$ obtained from $\{a_\beta\}$ and
$(G^\theta,T^\theta)$. We just need to show that the two cocycles are
cohomologous. Because we use the same element $h$ to define
both, they will turn out to be equal. 

From \eqref{eq.zeta}, \eqref{eq.m}, \eqref{eq.t} we see that it suffices to
prove that $m(\sigma)=m'(\sigma)$, where 
\[
m(\sigma) = \bigl(\prod_{\alpha \in R(\sigma_T)}
{\tilde a}_{\alpha_{\res}}^{\alpha^\vee}\bigr) n(\omega_T(\sigma))
\] 
and 
\[
m'(\sigma) = \bigl(\prod_{\beta \in R'(\sigma_{T^\theta})}
a_{\beta}^{\beta^\vee}\bigr) n'(\omega_T(\sigma));
\] 
here $R'$ is the root system $R(G^\theta,\mathbf T^\theta)$, which we
identify with the set of indivisible roots in $R_{\res}(G,T)$, and
$\beta^\vee$ is the coroot for the group $G^\theta$ associated to $\beta \in
R'$. As in  section \ref{sec.2} we  transport $a$-data from $T$ to
$\mathbf T$  without change of notation. 

For $\beta \in R(G^\theta,\mathbf T^\theta)$ a computation in root systems of
types
$A_1$ and
$A_2$ shows that
$\beta^\vee
\in X_*(T^\theta)=X_*(T)^\theta$ is as follows. 
Choose $\alpha
\in R(G,T)$ such that $\alpha_{\res}=\beta$; then 
\[
\beta^\vee=
\begin{cases}
N(\alpha^\vee) &\text{ if $\beta$ is of type $R_1$},\\
2N(\alpha^\vee) &\text{ if $\beta$ is of type $R_2$.}
\end{cases}
\] 
From this it follows easily that 
\[
\prod_{\alpha \in R(\sigma_T)}
a_{\alpha_{\res}}^{\alpha^\vee}=\prod_{\beta \in R'(\sigma_{T^\theta})}
a_{\beta}^{\beta^\vee}.
\] 
Therefore it suffices to show that 
\begin{equation}\label{eq.n'bn}
n'(\omega_T(\sigma))=\bigl( \prod_{\alpha \in
R(\sigma_T)} b_{\alpha}^{\alpha^\vee}\bigr) n(\omega_T(\sigma))
\end{equation}
where $b_\alpha$ is defined by 
\[
b_\alpha=
\begin{cases}
\frac{1}{2} &\text{ if $\alpha_{\res}$ has type $R_3$},  \\
1 &\text{ otherwise.}
\end{cases}
\] 
Now $R(\sigma_T)=R(\omega_T(\sigma))$; this is  
 an immediate consequence of the fact that $\sigma_{\mathbf
T}$ preserves the set of positive roots. Therefore the
equality \eqref{eq.n'bn} follows from Lemma \ref{lem.nn'}.
\end{proof}

\section{Two normalizations of the local Langlands
correspondence}\label{sec.ren}

\subsection{Two normalizations of the  reciprocity law isomorphism}
 Consider a nonarchimedean local field $F$. 
There are two ways to normalize the reciprocity law isomorphism $F^\times \to
W_F^{\ab}$.  In the classical version uniformizers correspond to the
Frobenius automorphism,  and in Deligne's
version  uniformizers correspond to the inverse of the Frobenius
automorphism.  Tate's article \cite{Tate} makes use of Deligne's
normalization, while Borel's article \cite{Borel} makes use of the classical
one. 

\subsection{Two normalizations of the local Langlands correspondence for
tori} More generally, there are two  ways to normalize the Langlands
correspondence for tori $T$ over $F$. Borel, following Langlands's 
conventions \cite{L}, uses the version which   is
compatible with the classical  reciprocity law when  $T=\mathbb G_m$. In
order to obtain a version of the Langlands correspondence for tori that is
compatible with Deligne's normalization of the reciprocity law, one has
simply to build in an inverse. In other words, if a quasicharacter $\chi$ on
$T(F)$ corresponds to a Langlands parameter
$\varphi:W_F
\to {}^LT$ under the classical Langlands correspondence for tori, then it
is $\chi^{-1}$ that corresponds to $\varphi$ in the version compatible with
Deligne's conventions. 

If one wants to follow Deligne's conventions for nonarchimedean local
fields, and one wants to have local-global compatibility, one  is forced to
build in an inverse when one consider the Langlands correspondence for tori
over global fields, and then one is forced to build it in for
$\mathbb R$ and $\mathbb C$ as well. 

\subsection{Two normalizations of the Langlands pairing}\label{sub.2norms}
Let $F$ be a local field.  
The two versions of the Langlands correspondence for tori $T$ over $F$ give
rise to two versions of the Langlands pairing between $T(F)$ and
$H^1(W_F,\hat T)$. We need a system of notation that  distinguishes between
them. Let $t \in T(F)$ and $\mathbf a \in H^1(W_F,\hat T)$. The classical
Langlands pairing is defined by 
$\langle t, \mathbf a \rangle:=\chi(t)$, where $\chi$ is the quasi-character
on $T(F)$ corresponding to $\mathbf a$ under the classical Langlands
correspondence for tori. 

We define a renormalized Langlands pairing by  
\[
\langle
t, \mathbf a \rangle_D:=\langle
t, \mathbf a \rangle^{-1}.
\] 
The subscript is meant to remind us that the version $\langle
t, \mathbf a \rangle_D$ of the Langlands pairing is 
the one compatible with Deligne's normalization of the reciprocity law. 

Now let $G$ be a  connected reductive group over $F$. In this setting
too  there is  a Langlands pairing as well as a renormalized version of
it.  For
$g \in G(F)$ and $\mathbf a \in H^1(W_F,Z(\hat G))$ we denote by 
$\langle g, \bold a \rangle$ the classical Langlands pairing between $g$ and
$\mathbf a$. We then define a renormalized version by putting 
\[
\langle
g, \mathbf a \rangle_D:=\langle
g, \mathbf a \rangle^{-1}.
\] 
In the case where $G$ is a torus, these two pairings coincide with the ones
we have just discussed.  

In this system of notation (A.3.13) on p.~137 of \cite{KS} becomes 
\[
\langle j(u),\hat z\rangle=\langle u,\hat i(\hat z) \rangle_D^{-1}=
\langle u,\hat i(\hat z) \rangle.
\]

\subsection{Two normalizations of the local Langlands correspondence for
connected reductive groups} 
 Still more generally there should be two ways to
normalize the conjectural Langlands correspondence for arbitrary connected
reductive groups over a local field $F$. There should be one version that is
compatible with the classical Langlands correspondence for tori. It should
also be compatible with the classical version of the local Langlands
correspondence for unramified representations \cite{Borel}. There should be
another version that is compatible with the inverse normalization of the
Langlands correspondence for tori, and also with the local Langlands
correspondence for unramified representations  defined using the geometric
Frobenius in place of the (arithmetic) Frobenius automorphism. Number
theorists, when considering the local Langlands correspondence for $GL_n$,
tend to use this second version.

These  two versions of the local Langlands correspondence should be related
in the following way. The $L$-group of a connected reductive group $G$ is
defined as a semidirect product $\hat G \rtimes W_F$. The action of $W_F$
on $\hat G$ preserves some splitting $(\mathcal B,\mathcal T,\{\chi\})$ (in
the notation of \cite{KS}). There is a unique automorphism $\theta_0$ of
$\hat G$ that preserves  $(\mathcal B,\mathcal T,\{\chi\})$ and on $\mathcal
T$ induces the automorphism $t \mapsto \omega_0(t)^{-1}$, where $\omega_0$
is the longest element in the Weyl group of $\mathcal T$. The automorphism
$\theta_0$ commutes with the action of $W_F$, and therefore we obtain an
automorphism ${}^L\theta_0$ of ${}^LG$ defined by     
${}^L\theta_0(g\sigma)=\theta_0(g)\sigma$ for $g \in \hat G$, $\sigma \in
W_F$. Let $\varphi: W_F \to {}^LG$ be a  Langlands parameter, and let
$\Pi$ be the (conjectural) $L$-packet attached to $\varphi$ by the
classical normalization of the local Langlands correspondence. Then in the
other version of the Langlands correspondence  that same packet
$\Pi$ should be attached to the Langlands parameter ${}^L\theta_0 \circ
\varphi$ obtained by composing ${}^L\theta_0$ with $\varphi$. Notice that
for tori this procedure does agree with the one described earlier, because
$\theta_0$ is simply the inversion map on $\mathcal T$. For $GL_n$ a
Langlands parameter
$\varphi$ amounts to an $n$-dimensional representation of $W_F$, and
replacing
$\varphi$ by ${}^L\theta_0\circ\varphi$ amounts to replacing that
$n$-dimensional representation by its contragredient. The situation in
general is entirely analogous,  because, for any finite dimensional complex
representation $r$ of
${}^LG$, the representation $r \circ {}^L\theta_0$ is isomorphic to the
contragredient of $r$. Thus, in a global context, replacing $\varphi$ by
${}^L\theta_0\circ\varphi$ amounts to replacing the arithmetic Frobenius by
the geometric Frobenius when defining automorphic $L$-functions. 

Our discussion may be summarized as follows. There should be two versions of
the local Langlands correspondence. The first  is the \emph{classical} one,
based on the arithmetic Frobenius. The second,  based on the
geometric Frobenius, and compatible with  Deligne's normalization of the
reciprocity law, will be referred to here as  the \emph{renormalized}
Langlands correspondence. We will write
$\Pi(\varphi)$ for the conjectural $L$-packet attached to a Langlands
parameter $\varphi$ by means of the classical Langlands correspondence, and
we will write
$\Pi_D(\varphi)$ for the one obtained using the renormalized Langlands
correspondence. Thus $\Pi_D(\varphi):=\Pi({}^L\theta_0\circ\varphi)$ (in
situations in which the classical Langlands correspondence is known).  

\section{Two versions $\Delta_D$ and $\Delta'$ of corrected twisted transfer
factors}\label{sec.55} 
\subsection{Standard endoscopy}\label{sub.std} 
One goal of standard endoscopy is to provide character identities
associated to endoscopic data $(H,s,\xi)$. Recall that $\xi$ is an
$L$-homomorphism $\xi:{}^LH \to {}^LG$. Given a tempered Langlands parameter
$\varphi_H:W_F \to {}^LH$ for $H$, one forms the Langlands parameter
$\varphi:=\xi
\circ \varphi_H$ for $G$, and then one expects to have character identities
involving the members of $\Pi(\varphi_H)$ on one side, and the members of
$\Pi(\varphi)$ on the other. The transfer factors in \cite{LS} are expected
to produce a notion of endoscopic transfer yielding such character
identities.

If, however, one prefers to use the renormalized local
Langlands correspondence, then one needs to renormalize the transfer factors
 $\Delta=\Delta_I\Delta_{II}\Delta_{III_1}\Delta_{III_2}\Delta_{IV}$ of
\cite{LS}. The renormalized factors, that we will denote by
$\Delta_D$, are easy to define. Only the term
$\Delta_{III_2}$ is affected, which is to say that $\Delta_D$ takes the form 
$\Delta_I
\Delta_{II}\Delta_{III_1}\Delta_{III_2,D}\Delta_{IV}$. Moreover the
renormalized term  $\Delta_{III_2,D}$  is formed as follows. Recall that 
$\Delta_{III_2}=\langle
\gamma,\mathbf a \rangle$ (see \cite[p.~247]{LS}). We now define 
$\Delta_{III_2,D}$ to be $\langle \gamma,\mathbf a_D\rangle_D$, but we need
to explain the notation used in  this expression.   

The renormalized Langlands pairing $\langle \cdot,\cdot \rangle_D$ was
defined in subsection
\ref{sub.2norms}; recall that    
 $\langle\gamma,\mathbf a_D
\rangle_D=\langle\gamma, \mathbf a_D\rangle^{-1}$. The relationship
between $\mathbf a$ and $\mathbf a_D$ is a bit more subtle. Recall from  
\cite[\S 3.5]{LS} that $\mathbf a \in H^1(W_F,\hat T)$  depends on the
choice of
$\chi$-data $\chi=\{\chi_\alpha\}$, and so we should  write $\mathbf a(\chi)$
when we need to keep track of this dependence.  Now
$\{\chi_\alpha^{-1}\}$ are also
$\chi$-data and we put $\mathbf a_D(\chi)=\mathbf a(\chi^{-1})$, where
$\chi^{-1}$ is an abbreviation for $\{\chi_\alpha^{-1}\}$. In this system of
notation we have 
\begin{align}
\Delta_{III_2}&=\langle \gamma,\mathbf a(\chi)\rangle,\\
\Delta_{III_2,D}&=\langle \gamma,\mathbf a_D(\chi)\rangle_D=\langle
\gamma,\mathbf a(\chi^{-1})\rangle^{-1}.
\end{align}

There is yet another way in which  transfer factors can be modified. 
For this one needs to notice that if $(H,s,\xi)$ is endoscopic data, so too
is $(H,s^{-1},\xi)$. We  obtain transfer factors $\Delta$ and $\Delta_D$
from 
$(H,s,\xi)$, and we also obtain transfer factors $\Delta'$ and $\Delta'_D$
from $(H,s^{-1},\xi)$. However, as in \cite{AA}, we can take a different
point of view by regarding $\Delta'$ as an alternative version of transfer
factors for the original endoscopic data $(H,s,\xi)$. The same applies
to $\Delta_D'$, and altogether there are four useful variants of transfer
factors for $(H,s,\xi)$, namely $\Delta$, $\Delta'$, $\Delta_D$ and
$\Delta_D'$. It will be necessary to bear this in mind as we turn now to 
twisted transfer factors. 

\subsection{Twisted endoscopy}
One goal of twisted endoscopy is  to provide twisted character
identities, and therefore one should expect to have twisted transfer
factors  adapted to the renormalized Langlands correspondence as well
as ones adapted to the classical Langlands
correspondence. We are now going to elaborate on this point, and at the
same time correct an error  in \cite{KS} that was found by Waldspurger. 

The error in \cite{KS} arose from  being inconsistent about  the
normalization of the Langlands correspondence for tori. The appendices of
\cite{KS} make use of the renormalized version of the Langlands
correspondence for tori, as one sees from the presence of $a^{-1}$ rather
than $a$ in the expression 
\[
\prod_{a \in K^\times} x_a(a^{-1})
\]
displayed in the middle of page~131 of \cite{KS}. However, in the course of
defining  twisted $\Delta_{III}$ (see p.~40 of \cite{KS}) we made use of the
admissible embeddings $^LT_H \hookrightarrow {}^LH$ and ${}^L(T_{\theta^*})
\hookrightarrow {}^LG^1$ obtained by applying \cite[\S
2.6]{LS} to our chosen $\chi$-data. This leads to nonsense because the
construction in \cite{LS} is adapted to the classical Langlands
correspondence. Indeed, as Waldspurger pointed out to us, the dependence
on $\chi$-data of the terms  $\Delta_{II}$ and $\Delta_{III}$ defined in
\cite{KS} does not cancel in the way that it should. (We of course want
twisted transfer factors to be independent of the choice of $\chi$-data.) 

There are two ways to fix this error. One leads to twisted transfer 
factors 
$\Delta'$ adapted to the classical Langlands
correspondence, and one leads to twisted transfer factors $\Delta_D$ adapted
to the renormalized Langlands correspondence.  We begin with twisted
$\Delta_D$, which requires only a small modification of \cite{KS}.   

\subsection{Twisted $\Delta_D$} 
We will define twisted $\Delta_D$ to be the product
$\Delta_I^{\new}\Delta_{II}\Delta_{III}^{\new}\Delta_{IV}$. In this product
$\Delta_{II}$ and $\Delta_{IV}$  are the terms defined in \cite{KS}, and
$\Delta_I^{\new}$ is the modified version of $\Delta_I$ that was defined 
earlier in this note. The only new term is $\Delta_{III}^{\new}$, which we
will now explain. 

The term $\Delta_{III}$ in \cite{KS} was defined in section 4.4 of
\cite{KS}. The definition was given first in the special case
in which
$H_1=H$ and was then given in the general case. Moreover, in section 5.3 of
\cite{KS} a simpler version of $\Delta_{III}$ was defined in the special case
when $G$ is quasisplit and $\theta$ preserves an $F$-splitting. All three
definitions follow the same pattern, and the modification needed to obtain
$\Delta_{III}^{\new}$ is the same in all three cases.  The modification is
easy to explain. The term $\Delta_{III}$ of \cite{KS} depends on a choice of
$\chi$-data. We now define $\Delta_{III}^{\new}$ to be the term
$\Delta_{III}$ for the \emph{inverse} set of
$\chi$-data. (As in \ref{sub.std} the inverse set of $\chi$-data is obtained
by replacing each
$\chi_{\alpha_{\res}}$ by $\chi_{\alpha_{\res}}^{-1}$.) 

There is an equivalent way to define twisted $\Delta_D$, due to the fact that
$\Delta_D$ is independent of the choice of $\chi$-data. Replacing our given
set of $\chi$-data by its inverse, we obtain the equality 
\begin{equation}
\Delta_D=\Delta_I^{\new}\Delta_{II}^{-1}\Delta_{III}\Delta_{IV}. 
\end{equation}
We used the obvious fact that replacing $\chi$-data by its inverse replaces
$\Delta_{II}$ by its inverse.

 The twisted transfer factor $\Delta_D$ has all the
properties stated in \cite{KS}. No further modifications need be made to
\cite{KS} (as far as we know), though one must always remember to interpret 
the phrases ``the Langlands correspondence'' and ``the Langlands pairing'' as
referring to the renormalized versions.  For example the quasicharacter
$\omega$  on p.~17 of \cite{KS} needs to be defined by
$\omega(g)=\langle g,\mathbf a \rangle_D$. Similarly, the quasicharacter
$\lambda_{H_1}$  of \cite{KS} needs to be defined by 
$\lambda_{H_1}(z_1)=\langle z_1,\mathbf b \rangle_D$, where $\mathbf b$ is
the Langlands parameter
\[
W_F\xrightarrow{c}\mathcal H \xrightarrow{\xi_{H_1}}{}^LH_1 \to {}^LZ_1
\]
considered on p.~23 of \cite{KS}. 

\begin{remark}
In the case of standard endoscopy, the Langlands parameter $\mathbf b$ above 
is
\emph{inverse}, in the group
$H^1(W_F,\hat Z_1)$, to the Langlands parameter $W_F \to {}^LG_1 \to
{}^LZ_1$ defined on p.~253 of
\cite{LS}. This remark clarifies the meaning of the comment made on lines
-4,-5 on p.~23 of \cite{KS}. 
\end{remark}

In the case of standard endoscopy the twisted factor $\Delta_D$ reduces to
the factor 
$\Delta_D$ discussed in the previous subsection, which explains our choice
of notation.   It is also desirable to have twisted transfer
factors adapted to the classical Langlands correspondence.  In the next
subsection we will see how to define twisted transfer factors $\Delta'$ that
reduce to   the factor
$\Delta'$ discussed in  the previous subsection in the case of standard
endoscopy. 

\subsection{Twisted $\Delta'$} Following a suggestion made to us by 
Waldspurger (and incorporating the improved version $\Delta_I^{\new}$ of
twisted
$\Delta_I$ discussed earlier in this note), we now put  
\begin{equation}
\Delta':=(\Delta_I^{\new}\Delta_{III})^{-1}\Delta_{II}\Delta_{IV}.
\end{equation}
In
this product  $\Delta_{II}$, $\Delta_{III}$ and $\Delta_{IV}$  are the terms
defined in
\cite{KS}. 

If one  uses the twisted transfer factors $\Delta'$, then one needs to 
insert many minus signs  in \cite{KS}, especially in the part concerning the
stabilization of the twisted trace formula.  A complete list of the necessary
changes would be rather long.  We will discuss only the  most significant 
ones. 

The first point to make is that now the quasicharacters $\omega$ and
$\lambda_{H_1}$ need to be defined using the classical Langlands pairing.  
 In other words, the quasicharacter 
$\omega$  on p.~17 of \cite{KS} now needs to be defined by
$\omega(g)=\langle g,\mathbf a \rangle$. Similarly, the quasicharacter
$\lambda_{H_1}$  of \cite{KS} now  needs to be defined by 
$\lambda_{H_1}(z_1)=\langle z_1,\mathbf b \rangle$, with $\mathbf b$ as in
the previous subsection. With this understanding Lemma 5.1.C of \cite{KS} is
correct for $\Delta'$, as is part (2) of Theorem 5.1.D. However part (1) of
that theorem needs to be replaced by 
\[
\Delta'(\gamma_1,\delta')=\langle \inv(\delta,\delta'),\kappa_\delta
\rangle^{-1}\Delta'(\gamma_1,\delta).
\]

When one uses $\Delta'$, one must insert many minus signs  in the
stabilization of the elliptic $\theta$-regular terms on the geometric side of
the  twisted trace formula. We will now discuss 
all  the basic definitions and key statements that need to be modified. These
modifications entail changes in some of the formulas in the proofs, but only
a few of these are listed here. The rest will be obvious to anyone who 
systematically works through the proof of the stabilization.  
All the page references in the rest of this subsection are to
\cite{KS}. 

The equality in the fourth line on p.~81  must be replaced by 
\[\langle \inv'(\delta,\delta'),\beta(\mathbf a)\rangle^{-1}=\omega(h),\] 
and the equality on the ninth and tenth lines of that page must be replaced
by 
\[O_{\delta'\theta}(f)=\langle\inv'(\delta,\delta'),\beta(\mathbf a) \rangle
O_{\delta\theta}(f).\] There is no change in the displayed formula (6.2.2)
on that page. 

On p.~89 the definition of $\Phi$ must be replaced by 
\[
\Phi(x)=\langle \obs(\delta),\kappa_0 \rangle O_{\delta\theta}(f). 
\] The displayed formula in line -7 of that page must be replaced by 
\[
\langle\inv(\delta,\delta'),\kappa_0\rangle^{-1}=\omega(h).
\]

On p.~93 the expression (6.4.8) must be replaced by 
\[
\langle\inv(\delta_0,\delta(v)),\kappa_0\rangle^{-1}O_{\delta(v)\theta}(f_v).
\]
On p.~94 the first factor in the expression displayed on line 12 should be
replaced by $\langle\inv(\delta_0,\delta),\kappa_0\rangle^{-1}$. 

On p.~96 the definition of the twisted $\kappa$-orbital integral should be
replaced by 
\[
O^\kappa_{\delta_0\theta}(f)=\int_{\mathcal D(T,\theta,\mathbb A)} \langle
e,\kappa \rangle^{-1} O_{\delta_e\theta}(f) \,de_{\Tam}. 
\]
The factor $\langle
e,\kappa \rangle$ again needs to be replaced by $\langle
e,\kappa \rangle^{-1}$ in the expression on line 13 of p.~102. 

The last changes are especially significant. The right side of the
equality in Lemma 7.3.A on p.~109 should be replaced by its inverse (when
$\Delta'$ is used on the left side). The same is true for the equality in
Corollary 7.3.B, which must be replaced by  
\[
\Delta'_\mathbb A(\gamma_1,\delta)=\langle\obs(\delta),\kappa\rangle^{-1}. 
\]  

\subsection{Whittaker normalization of transfer factors}\label{sub.Wh}
Assume that
$G$ is quasi-split, and  choose an $F$-splitting $(B,T,\{X_\alpha\})$. 
Assume further that $\theta$ preserves the chosen $F$-splitting. In this
situation one can consider the Whittaker normalization $\Delta_\lambda$ of
twisted transfer factors introduced in section 5.3 of
\cite{KS}.  We remind the reader that $\Delta_\lambda$ was defined by 
\[
\Delta_\lambda=
\varepsilon_L(V,\psi)\Delta_I\Delta_{II}\Delta_{III}\Delta_{IV}. 
\]
(see pages 63 and 65 of
\cite{KS}). 

Of course this definition too needs to be modified. Again there are two
variants, these being Whittaker normalized versions   $\Delta_D^\lambda$, 
$\Delta'_\lambda$ of $\Delta_D$, $\Delta'$ respectively. The two variants 
are defined by 
\begin{align}
\Delta_D^\lambda&=
\varepsilon_L(V,\psi)\Delta_I^{\new}\Delta_{II}\Delta_{III}^{\new}\Delta_{IV},\\
&=\varepsilon_L(V,\psi)\Delta_I^{\new}
\Delta_{II}^{-1}\Delta_{III}\Delta_{IV},\notag \\ 
\Delta'_\lambda&=
\varepsilon_L(V,\psi)(\Delta_I^{\new}\Delta_{III})^{-1}\Delta_{II}\Delta_{IV}.
\end{align} 

The square of  $\epsilon_L(V,\psi)$ is $\pm1$ (see p.~65 of \cite{KS}).
Therefore it may well happen that  $\epsilon_L(V,\psi)$ is not equal to its
inverse, and it should be emphasized that we use $\epsilon_L(V,\psi)$, as
opposed to its inverse, 
 in the definitions of both $\Delta_D^\lambda$ and
$\Delta'_\lambda$. 

In the next subsection we will make use of the following definition:  
\[\Delta'_0:=(\Delta_I^{\new}\Delta_{III})^{-1}\Delta_{II}\Delta_{IV}.\] 
Of course we then have the equality
$\Delta'_\lambda=\varepsilon_L(V,\psi)\Delta'_0$. In the case of standard
endoscopy $\Delta'_0$ coincides with the factor $\Delta_0$ defined on p.~248
of \cite{LS}, but with $(H,s,\xi)$ replaced by $(H,s^{-1},\xi)$. In other
words, $\Delta'_0$ bears the same relation to $\Delta_0$ as $\Delta'$ does
to $\Delta$ (see subsection \ref{sub.std}). 

\subsection{Twisted transfer factors for cyclic base change} \label{sub.cbc}
The twisted transfer factor $\Delta'_0$ considered in the last subsection
differs  in two ways from the incorrectly defined twisted transfer factor
$\Delta_0$ on p.~63 of \cite{KS}. It uses $\Delta_I^{\new}$ rather than
$\Delta_I$, and  both $\Delta_I^{\new}$ and $\Delta_{III}$
occur with an inverse.  Only the second of these two
differences is relevant in the special case of cyclic base change, since
$\Delta_I^{\new}=\Delta_I$ in that case. 

Since twisted $\Delta_0$ was incorrectly defined in \cite{KS}, it is
necessary to reformulate Proposition A.1.10 and Corollary A.2.10    
in \cite{App} by using $\Delta'_0$ in place of $\Delta_0$.  
Proposition A.1.10 in \cite{App}  should be replaced by the
following result. 
\begin{proposition} \label{prop.app}
There is an equality 
\begin{equation}\label{eq.fa}
\Delta'_0(\gamma_H,\delta)=\Delta'_0(\gamma_H,\gamma)
\langle\inv(\gamma,\delta),(a^{-1},\tilde s) \rangle^{-1}.
\end{equation} 
\end{proposition}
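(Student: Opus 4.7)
The plan is to revisit the proof of Proposition A.1.10 in \cite{App}, replacing the erroneous $\Delta_0$ of \cite{KS} by the corrected $\Delta'_0$ from subsection~\ref{sub.Wh}. Because in the cyclic base change setting the restricted root system has no roots of type $R_3$, the modification $\Delta_I^{\new}$ agrees with $\Delta_I$, so
\[
\Delta'_0 = (\Delta_I\Delta_{III})^{-1}\Delta_{II}\Delta_{IV},
\]
and both sides of \eqref{eq.fa} are computed relative to the same choice of $\chi$-data and the same endoscopic datum. As in \cite{App}, the pair $(H,s,\xi)$ is the one appropriate for cyclic base change, so that $\tilde s$ and the cocycle $a$ encoding the cyclic base change datum are defined exactly as there.

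First I would observe that $\Delta_{II}$ and $\Delta_{IV}$ depend on $\delta$ only through its stable twisted conjugacy class, which is by construction equal to that of $\gamma$; these factors therefore cancel in the ratio $\Delta'_0(\gamma_H,\delta)/\Delta'_0(\gamma_H,\gamma)$. Thus the entire dependence on $\inv(\gamma,\delta)$ comes from $(\Delta_I\Delta_{III})^{-1}$. Next I would rerun the Tate--Nakayama computation carried out in \cite{App} for $\Delta_I\Delta_{III}$, which expresses the ratio $(\Delta_I\Delta_{III})(\gamma_H,\delta)/(\Delta_I\Delta_{III})(\gamma_H,\gamma)$ as a Langlands pairing of $\inv(\gamma,\delta)$ with a class built from $a$ and $\tilde s$. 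The combinatorial/cohomological content of that computation is not affected by the switch from $\Delta_0$ to $\Delta'_0$; only the interpretation of the resulting pairing changes.

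The key observation is then that $\Delta'$ is adapted to the \emph{classical} Langlands correspondence, whereas the appendices of \cite{KS} used the renormalized pairing of subsection~\ref{sub.2norms} (this is the source of the sign error Waldspurger pointed out). Re-expressing the pairing in the classical convention changes $(a,\tilde s)$ into $(a^{-1},\tilde s)$, exactly as the quasicharacters $\omega$ and $\lambda_{H_1}$ transform under this renormalization in subsection~\ref{sub.std}; the explicit inversion of $\Delta_I\Delta_{III}$ built into the definition of $\Delta'_0$ supplies the additional outer exponent $-1$, yielding
\[
\frac{\Delta'_0(\gamma_H,\delta)}{\Delta'_0(\gamma_H,\gamma)}=\langle\inv(\gamma,\delta),(a^{-1},\tilde s)\rangle^{-1}.
\]
The main obstacle is bookkeeping: one must verify that precisely these two effects — the classical versus renormalized normalization of the Langlands pairing, and the inversion of $\Delta_I\Delta_{III}$ in $\Delta'_0$ — are the only differences from the computation in \cite{App}, and that they combine (rather than cancel) to give the stated formula. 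In particular one has to check that the change $\Delta_{III}\rightsquigarrow\Delta_{III}^{\new}$ (an inversion of $\chi$-data) plays no role here, since $\Delta'_0$ uses the original $\Delta_{III}$ and the same $\chi$-data on both sides of \eqref{eq.fa}.
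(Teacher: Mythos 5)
Your proposal takes essentially the same route as the paper: rework the proof of Proposition A.1.10 of \cite{App} with $\Delta'_0$ in place of $\Delta_0$ (using $\Delta_I^{\new}=\Delta_I$ in the cyclic base change case), and locate the correction in the mismatch between the classical and renormalized Langlands pairings combined with the inversion of $\Delta_I\Delta_{III}$ built into $\Delta'_0$. The paper's proof differs only in being more precise about where the mismatch sits --- the formula $\Delta_{III}(\gamma_H,\gamma)=\langle\gamma,b\rangle$ on p.~194 of \cite{App} uses the classical pairing while $\langle(\gamma,\delta),(b,1)\rangle^{-1}=\langle\gamma,b\rangle$ on the same page uses the renormalized one --- and one small point of your bookkeeping is off: the pair $(a^{-1},\tilde s)$ already appears in the formula of \cite{App}, so the only new ingredient is the outer exponent $-1$, not an additional passage from $a$ to $a^{-1}$.
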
  
The notation used in the second factor in the righthand side of
\eqref{eq.fa} is the same as in \cite{App}, but the exponent $-1$ occurring
in this factor is new. 
\begin{proof}
One just needs to work through the proof of Proposition A.1.10, making sure
to use the Langlands pairing more carefully than was done  
there. In the displayed formula $\Delta_{III}(\gamma_H,\gamma)=\langle
\gamma,b\rangle$ in the middle of p.~194 of \cite{App}, the Langlands pairing
occurring on the right side is the one used in \cite{LS}, namely the
classical one. In the formula
$\langle(\gamma,\delta),(b,1)\rangle^{-1}=\langle
\gamma,b \rangle$ on the third line from the bottom on p.~194 of \cite{App},
the Langlands pairing occurring on the right side is the renormalized one 
$\langle
\gamma,b \rangle_D$
discussed in subsection
\ref{sub.2norms}. 
\end{proof} 

Corollary A.2.10 in \cite{App}  should be replaced by the following result. 
\begin{theorem} \label{thm.app}
There is an equality 
\begin{equation}\label{eq.fb} 
\Delta'_0(\gamma_H,\delta)=\Delta'_0(\gamma_H,\gamma)
\langle\alpha(\gamma,\delta),s\rangle. 
\end{equation}
\end{theorem}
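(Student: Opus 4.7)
The plan is to deduce Theorem \ref{thm.app} directly from Proposition \ref{prop.app}. That proposition already yields
\[\Delta'_0(\gamma_H,\delta)=\Delta'_0(\gamma_H,\gamma)\langle\inv(\gamma,\delta),(a^{-1},\tilde s) \rangle^{-1},\]
so the theorem reduces to the purely cohomological identity
\[\langle\inv(\gamma,\delta),(a^{-1},\tilde s) \rangle^{-1}=\langle\alpha(\gamma,\delta),s\rangle.\]
This is essentially the content of the original Corollary A.2.10 of \cite{App}, rewritten consistently with the corrected sign conventions. So the proof is really a reworking of the proof of that corollary, with the pairings interpreted according to the conventions fixed in Subsection \ref{sub.2norms}.

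Next I would follow the path of the original proof: use the $W_F$-module exact sequences set up in \cite{App} to transport $\inv(\gamma,\delta)$ to $\alpha(\gamma,\delta)$, and simultaneously transport $(a^{-1},\tilde s)$ to $s$ along the dual sequence. In the argument of \cite{App} these two transports were matched up under a single tacitly-chosen normalization of the Langlands pairing; here one should instead run the transport with the \emph{classical} Langlands pairing on one side and the \emph{renormalized} pairing on the other, exactly as was done in the proof of Proposition \ref{prop.app}. The inversion that appears on the right-hand side of Proposition \ref{prop.app} is then precisely the one that gets absorbed into the $a^{-1}$ entry of $(a^{-1},\tilde s)$ (together with the sign convention built into the definition of $\alpha$), producing the clean right-hand side $\langle\alpha(\gamma,\delta),s\rangle$.

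The main obstacle is disciplined sign bookkeeping. There are at least four places where an inversion can creep in: (i) the exponent $-1$ in Proposition \ref{prop.app}; (ii) the sign convention in the definition of $\alpha$ in \cite{App}; (iii) the switch between $\langle\cdot,\cdot\rangle$ and $\langle\cdot,\cdot\rangle_D$ as one moves between the classical and renormalized setups; and (iv) the $a^{-1}$ in the first coordinate of $(a^{-1},\tilde s)$, which is itself an artifact of working with the renormalized reciprocity law. Verifying that these four cancel leaving no net sign is the technical core of the argument, but it should require no new ideas beyond those already present in the proofs of Proposition A.1.10 and Corollary A.2.10 of \cite{App}, once the conventions of Subsection \ref{sub.2norms} are applied uniformly.
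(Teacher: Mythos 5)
Your proposal is correct and follows essentially the same route as the paper: the purely cohomological identity you isolate, $\langle\inv(\gamma,\delta),(a^{-1},\tilde s)\rangle^{-1}=\langle\alpha(\gamma,\delta),s\rangle$, is precisely (a rearrangement of) Theorem A.2.9 of \cite{App}, which is unaffected by the sign corrections, and the paper's proof is simply to combine Proposition \ref{prop.app} with that theorem. The only difference is that you propose to re-derive this identity by reworking the transport argument with mixed classical/renormalized pairings, whereas it can be cited as is --- all the normalization bookkeeping is already absorbed into the new exponent $-1$ in Proposition \ref{prop.app}.
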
 
The notation used in the second factor in the righthand side of
\eqref{eq.fb} is the same as in \cite{App}, but the exponent $-1$ occurring
in \cite{App} is no longer present. 
\begin{proof}
Combine Proposition \ref{prop.app} above with Theorem A.2.9 in \cite{App}.
\end{proof}

Fortunately Theorem \ref{thm.app} is exactly the result needed to justify
the main contention made in \cite{App}, namely that it was legitimate to
use   
$\langle \alpha(\gamma_0;\delta),s\rangle \Delta_p(\gamma_H,\gamma_0)$ as
twisted transfer factors in \cite{AA}. The point is that the factor denoted
by 
$\Delta_p(\gamma_H,\gamma_0)$ in  \cite[p.~178]{AA} coincides with the
factor denoted by $\Delta'_0(\gamma_H,\gamma_0)$ in this note. 

\subsection{Another correction} We also take this opportunity to point out
that the definition of hypercohomology groups 
 given in   \cite[A.1]{KS} is correct only for  complexes of
$G$-modules that are bounded below. (For Tate hypercohomology the complexes
even need to be bounded above and below.) This does not affect the main
results in that appendix, which only involve bounded complexes. 

\appendix 
\section{Computations in $SL_3$} \label{sec.5}
\subsection{Standard splitting and automorphism for $SL(3)$} 
Let $k$ be a field. We consider the group $G=SL(3)$ 
and the 
automorphism $\theta$ of $G$ given by 
\[
\theta(g)=
\begin{bmatrix}
0&0&1 \\
0&-1&0 \\
1 & 0 & 0
\end{bmatrix} 
{}^tg^{-1}
\begin{bmatrix}
0&0&1 \\
0&-1&0 \\
1 & 0 & 0
\end{bmatrix}^{-1}. 
\] 
The automorphism $\theta$ has order $2$ and preserves the following splitting
 $(\mathbf B,\mathbf T,\xi_1,\xi_2)$ of
$G$. As maximal torus $\mathbf T$, we take the
diagonal matrices  in $G$. As Borel subgroup $\mathbf B$, we take the upper
triangular matrices in
$G$. The simple roots $\alpha_1$, $\alpha_2$ take the values $\alpha_1=a/b$
and 
$\alpha_2=b/c$ on the diagonal matrix 
\[
\begin{bmatrix}
a&0&0 \\
0&b&0 \\
0 & 0 & c
\end{bmatrix}
\]
and the remaining positive root is $\alpha_3=\alpha_1+\alpha_2$. 
The last ingredient in the standard splitting consists of  the following two 
homomorphisms $\xi_1$, $\xi_2$ from $SL(2)$ to $G$:
\begin{align*}
\xi_1
\begin{bmatrix}
a&b \\
c&d 
\end{bmatrix} 
&= 
\begin{bmatrix}
a&b&0 \\
c&d&0 \\
0 & 0 & 1
\end{bmatrix} \\
\xi_2
\begin{bmatrix}
a&b \\
c&d 
\end{bmatrix} 
&= 
\begin{bmatrix}
1&0&0 \\
0&a&b \\
0 & c & d
\end{bmatrix}
\end{align*}
It is obvious that $\theta \mathbf T=\mathbf T$, $\theta \mathbf B=
\mathbf B$ and 
$\theta
\circ \xi_1=\xi_2$. 

We write $N$ for the normalizer of $\mathbf T$ in $G$. 
Recall that untwisted splitting invariants for $G$ are 
constructed using the   
 elements $n_1,n_2 \in N$ defined by 
\[
n_i=\xi_i 
\begin{bmatrix}
0&1 \\
-1&0 
\end{bmatrix}
\]  
 
The group of fixed points of $\theta$ in the Weyl group
$\mathbf\Omega=N/\mathbf T$ is
$\{1,\omega_0\}$, where $\omega_0$ is the longest element of
$\mathbf\Omega$.  The standard lift
of  $\omega_0$ is 
 $n_3:=n_1n_2n_1=n_2n_1n_2$, 
Note that $\theta(n_3)=n_3$, simply because $\theta$ exchanges $n_1$ and
$n_2$.
 Explicitly, we have 
\[ 
n_3=
\begin{bmatrix}
0&0&1 \\
0&-1&0 \\
1 & 0 & 0
\end{bmatrix}
 \]

\subsection{The homomorphism $SL(2) \to SL(3)$ determined by the adjoint 
representation of $SL(2)$} 
The adjoint representation of $SL(2)$ will give us a particular 
homomorphism $\Ad:SL(2) \to SL(3)$ once we choose a basis for the Lie algebra
of
$SL(2)$. We take a slightly non-traditional basis, 
namely 
\[
X=\begin{bmatrix}
0&-1 \\
0&0 
\end{bmatrix}
\qquad
H=\begin{bmatrix}
1&0 \\
0&-1 
\end{bmatrix} 
\qquad 
Y=\begin{bmatrix}
0&0 \\
1&0 
\end{bmatrix}
\]
The non-traditional minus sign occurring in the matrix defining $X$ makes the
explicit formula for $\Ad:SL(2) \to SL(3)$ free of  minus signs. This formula
is as follows:  
\[
 \begin{bmatrix}
a&b \\
c&d 
\end{bmatrix} \mapsto
\begin{bmatrix}
a^2&2ab&b^2 \\
ac&ad+bc&bd \\
c^2 & 2cd & d^2
\end{bmatrix}
\] 
The number $2$ appearing in this last $3\times 3$ matrix is the source of
the difficulty in finding a good definition of $\Delta_I$. The point is that
the adjoint representation of
$SL(2)$ is a bit pathological  in characteristic $2$. 

Now let us assume that the characteristic of $k$ is not $2$. We are then 
free to conjugate the homomorphism $\Ad$ by the diagonal matrix 
$(1,2,2)$, thus obtaining the homomorphism $\Ad':SL(2) \to SL(3)$ given by 
\[
 \begin{bmatrix}
a&b \\
c&d 
\end{bmatrix} \mapsto
\begin{bmatrix}
a^2&ab&\frac{1}{2}b^2 \\
2ac&ad+bc&bd \\
2c^2 & 2cd & d^2
\end{bmatrix}
\] 
In particular  
\[
 \begin{bmatrix}
1&x \\
0&1 
\end{bmatrix} \mapsto
\begin{bmatrix}
1&x&\frac{1}{2}x^2 \\
0&1&x \\
0 & 0 & 1
\end{bmatrix}
\] 
and
\[
 \begin{bmatrix}
0&1 \\
-1&0 
\end{bmatrix} 
\mapsto
\begin{bmatrix}
0&0&\frac{1}{2}\\
0&-1&0\\
2 & 0 & 0
\end{bmatrix}
=:n'_3
\]

Note that $\Ad'$ induces an isomorphism from $PGL(2)$ to  $G^\theta$,
the group of fixed points of
$\theta$ on
$G$. Thus $n'_3$ is used to form untwisted splitting invariants 
for $G^\theta$, and it is  interesting to compare $n'_3$ with $n_3$. 
Inspecting the two matrices, one finds that
$n_3'=(\frac{1}{2})^{\alpha_3^\vee}n_3$.

\bibliographystyle{amsalpha}
\providecommand{\bysame}{\leavevmode\hbox to3em{\hrulefill}\thinspace}

\end{document}